\documentclass{article}

\usepackage{arxiv}

\usepackage[utf8]{inputenc} 
\usepackage[T1]{fontenc}    
\usepackage{hyperref}       
\usepackage{url}            
\usepackage{booktabs}       
\usepackage{amsfonts}       
\usepackage{nicefrac}       
\usepackage{microtype}      
\usepackage{graphicx}
\usepackage{natbib}
\usepackage{doi}
\usepackage{physics,amsmath}
\usepackage{authblk}
\usepackage{amsthm}
\usepackage{placeins}
\usepackage{soul}
\usepackage{xcolor}

\newtheorem{theorem}{Theorem}[section]
\newtheorem{lemma}[theorem]{Lemma}

\newtheorem{remark}[theorem]{Remark}
\newcommand{\average}[1]{\ensuremath{\{\!\{#1\}\!\}} }
\newcommand{\jump}[1]{\ensuremath{[\![#1]\!]} }

\title{An entropy stable discontinuous Galerkin method for the spherical thermal shallow water equations}

\author[1]{Kieran Ricardo}
\author[1]{Kenneth Duru}
\author[2]{David Lee}
\affil[1]{Mathematical Sciences Institute, Australian National University, Canberra, Australia}
\affil[2]{Bureau of Meteorology, Melbourne, Australia}

\date{} 					

\begin{document}
\maketitle

\begin{abstract}
We present a novel discontinuous Galerkin finite element method for numerical simulations of  the rotating thermal shallow water equations in complex geometries using curvilinear meshes, with arbitrary accuracy. We derive an entropy functional which is convex, and  which must be preserved in order to preserve model stability at the discrete level. The numerical method is provably entropy stable and conserves mass, buoyancy, vorticity, and energy. This is achieved by using novel entropy stable numerical fluxes, summation-by-parts principle, and splitting the pressure and convection operators so that we can circumvent the use of chain rule at the discrete level. Numerical simulations on a cubed sphere mesh are presented to  verify the theoretical results. The numerical experiments demonstrate the robustness of the method for a regime of well developed turbulence, where it can be run stably without any dissipation. The entropy stable fluxes are sufficient to control the grid scale noise generated by geostrophic turbulence, eliminating the need for artificial stabilisation. 
\end{abstract}


\section{Introduction}

The rotating thermal shallow water equations (TRSW) have recently gained attention as a test bed for atmospheric models. These equations extend the rotating shallow water equations (RSW) to include a temperature-like quantity, known as buoyancy, that is transported by the flow and modulates the hydrostatic pressure gradient forcing. As a result the TRSW possess an identical skew-symmetric Hamiltonian structure to the compressible Euler equations while still being a two dimensional system and more computationally efficient to simulate than the three dimensional compressible Euler equations.
Also, the structural similarity between TRSW and the Euler equations considerably simplifies the extension of  numerical models for TRSW to the compressible Euler equations, as opposed to the more traditional RSW. Therefore, the TRSW is an attractive equation set with which to prototype atmospheric models. However, the development of robust numerical methods for the TRSW on curvilinear meshes pose a significant challenge. 
The desired properties for atmospheric models, as outlined in \cite{staniforth2012horizontal}, serve as the basis for TRSW-specific criteria detailed in \cite{eldred2019quasi}. These desirable properties include
\begin{enumerate}
    \item Conservation of total mass
    \item Conservation of total buoyancy
    \item Conservation of total energy
    \item Conservation of total vorticity
    \item Compatible advection of buoyancy, an initially constant buoyancy will remain constant
    \item 
    High order accurate
\end{enumerate}
Where we have omitted properties pertaining to the implied RSW solver for brevity. In addition to these properties we argue that a discrete method for the TRSW should also conserve buoyancy variance to ensure numerical stability.

In \cite{eldred2019quasi} a planar compatible mixed finite element method for the TRSW is developed that achieves the above properties. In parallel to this, \cite{kurganov2020well} and \cite{kurganov2021thermal} present a well-balanced finite volume for the TRSW. While both methods accurately simulate the planar test cases outlined in their respective articles, neither method has been extended to curvilinear meshes or the spherical geometry. 

As mentioned above, while there are finite element method \cite{eldred2019quasi} and finite volume method \cite{kurganov2020well,kurganov2021thermal} for TRSW, to the best of our knowledge, there is no DG method for the TRSW in literature. More importantly, we are not aware of any numerical method for TRSW applicable to  curvilinear meshes or the spherical geometry, which is necessary for many practical problems.  However, there is a large body of work on DG methods for the structurally similar compressible Euler equations \cite{waruszewski2022entropy, ranocha2020entropy, ducros2000high, morinishi2010skew, sjogreen2019entropy, gassner2016split, hennemann2021provably, pirozzoli2010generalized}. These methods utilize the conservative form of the Euler equations and aim to conserve the physical entropy by employing techniques such as splitting derivative operators, summation by parts (SBP), and entropy stable numerical fluxes. 

In the current study, we focus on the the vector invariant form of the TRSW that is amenable to the design of mimetic numerical methods \cite{ricardo2023conservation}, and which can potentially preserve important invariants in the system. However, as opposed to the more traditional RSW \cite{ricardo2023conservation}, the energy functional is no longer a convex function of the prognostic variables and will not ensure numerical stability when preserved. First, we derive a new entropy functional for the TRSW, the buoyancy variance, and prove that it is conserved by the TRSW. The  proof of conservation of buoyancy variance  relies on the chain rule which is difficult to discretely mimic. To design an energy and entropy stable numerical method, we first derive a split form of the system that negates the need for the chain rule in the entropy conservation proof. This enables us to prove entropy and stability using only integration by parts.

Second, we derive a DG method for the split form of the TRSW on curvilinear meshes. For the discrete analysis we employ structure preserving curvilinear coordinate transformations and the summation-by-parts (SBP) principle to cancel the volume terms within the DG elements. The remaining surface terms can be eliminated using accurate numerical fluxes that connect locally adjacent elements.  To complete the DG method we construct a family of novel numerical fluxes that conserve/dissipate energy and entropy. We then prove semi-discrete entropy and energy stability by leveraging the SBP property of DG-SEM \cite{gassner2013skew} and our numerical fluxes.

Finally, we present detailed numerical simulations on a cubed sphere mesh and  verify the theoretical results. The numerical experiments demonstrate the robustness of the method for a regime of well developed turbulence, where it can be run stably without any artificial dissipation. The entropy stable fluxes are sufficient to control the grid scale noise generated by geostrophic turbulence, eliminating the need for artificial stabilisation.
Our approach has been successfully extended to compatible MFE methods for both the TRSW and Euler equations \cite{ricardo2023entropy}, and extension to a DG method for the Euler equations is an ongoing work.

The rest of the article is as follows. Section 2 presents the continuous TRSW and the conservation properties that our scheme discretely mimics. Section 3 presents the  DG method. In section 4, we present proofs of the conservation and stability properties of the numerical model. Section 5 presents detailed numerical experiments which verify the theoretical analysis performed in the paper. In section 6, we draw conclusions and suggests directions for future work.

\section{Continuous thermal shallow water equations}

In this section we introduce the TRSW on the surface of a spherical
geometry and derive the continuous properties that our numerical scheme
should discretely mimic. While much of this analysis exists in the literature \cite{eldred2019quasi} we include it here to aid the exposition, and the analysis performed in this paper.

\subsection{Vector invariant equations}

The TRSW equations are an extension of the RSW equations, incorporating a variable fluid density that is advected by the flow. The fluid density is represented by the buoyancy term $b$, which is defined as the scaled fluid density $b=g\tfrac{\rho}{\bar{\rho}}$, where $\rho$ represents density and $\bar{\rho}$ is a constant reference density. We consider the vector invariant form of the TRSW on a 2D manifold $\Omega$ embedded in $R^3$ with periodic boundary conditions. We choose periodic boundary conditions as they simplify the analysis and the domain of most practical interest is the sphere. The vector invariant TRSW are  
\begin{equation}
	\vb{u}_t + \omega\vb{k}\cross\vb{u} + \nabla G +\tfrac{1}{2}b\nabla h= 0\text{,}
	\label{eq:momentum}
\end{equation}
\begin{equation}
	h_t + \nabla \cdot \vb{F} = 0\text{,}
	\label{eq:mass}
\end{equation}
\begin{equation}
	(hb)_t + \nabla \cdot \vb{B} = 0\text{,}
	\label{eq:buoyancy}
\end{equation}
with
\begin{equation}\label{eq:fluxes_and_vorticity}
	\omega = \vb{k} \cdot \nabla \cross \vb{u} + f\text{,} \quad \vb{B} = hb\vb{u}\text{,} \quad \vb{F} = h\vb{u}\text{,} \quad G = \tfrac{1}{2}\vb{u}\cdot\vb{u} + \tfrac{1}{2}ghb\text{,}
\end{equation}
where $t\ge 0$ denotes the time variable, the unknowns are the flow velocity vector $\vb{u}=[u, v, w]^T$, the fluid depth $h$, and the mass-weighted buoyancy $hb$. Note that here and in the rest of the paper all differential operators are taken to be constrained to the manifold. Furthermore, $\omega$ is the absolute vorticity, $f$ is the Coriolis frequency, $\vb{F}$ is the mass flux, $G$ is the potential, $\vb{k}$ is the outward unit normal of the manifold, and $b$ is the buoyancy. At $t=0$, we augment the TRSW \eqref{eq:momentum}--\eqref{eq:buoyancy} with the initial conditions
\begin{align}
    \vb{u} = \vb{u}_0(\mathbf{x}), \quad {h} = {h}_0(\mathbf{x}), \quad {b} = {b}_0(\mathbf{x}), \quad \mathbf{x} \in \Omega.
\end{align}
such that the flow is constrained to the manifold $\vb{u}_0 \cdot \vb{k} = 0$. The constraint of the differential operators and the initial constraint $\vb{u}_0 \cdot \vb{k} = 0$ is sufficient to ensure that the $\vb{u}(\vb{x}, t) \cdot \vb{k} = 0$ holds $\forall \vb{x}\in \Omega,\; t \geq 0$. Note that if ${b}_0(\mathbf{x})=g$ is initially constant, it remains constant and the TRSW reduce to the regular RSW.


The TRSW differs from the RSW in two main aspects. Firstly, the hydrostatic pressure term is modified to account for the effects of variable density, becoming $\tfrac{1}{2}\nabla hb + \tfrac{1}{2}b\nabla h$. Secondly, the TRSW introduces the conservation equation for $hb$. Despite the functional similarities between TRSW and RSW, the inclusion of density variations presents unexpected numerical challenges. In contrast to the RSW, ensuring energy stability alone does not guarantee numerical stability for TRSW models. Our experience is that energy-bounded extensions of stable RSW methods to TRSW tend to exhibit numerical instability over relatively short time frames. We hypothesise that this discrepancy arises from the fact that while energy serves as an entropy for the RSW, it does not hold true for the TRSW.


\subsection{Conservation of mass and mass-weighted buoyancy}

We present the result in the following theorem.
\begin{theorem}\label{theo:conservation_of_mass_bouyancy}
Consider the TRSW \eqref{eq:momentum}--\eqref{eq:buoyancy} on a 2D manifold $\Omega$ embedded in $R^3$ with periodic boundary conditions. At time $t \ge 0$, let the total mass be denoted by $M(t) = \int_\Omega{hd\Omega}$ and the total mass-weighted buoyancy  denoted by $S(t) = \int_\Omega{hbd\Omega}$. We have
\begin{align}
    \frac{d M}{dt} = 0, \quad \& \quad \frac{d S}{dt} = 0, \quad \forall t\ge 0.
\end{align}
\end{theorem}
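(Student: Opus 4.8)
The plan is to integrate the conservation-law equations \eqref{eq:mass} and \eqref{eq:buoyancy} directly over $\Omega$ and exploit the divergence structure together with the closed/periodic nature of the manifold. First I would consider $M(t) = \int_\Omega h \, d\Omega$, differentiate under the integral sign (justified by smoothness of the solution on $[0,t]$), and substitute $h_t = -\nabla\cdot\vb{F}$ from \eqref{eq:mass}, giving $\tfrac{dM}{dt} = -\int_\Omega \nabla\cdot\vb{F}\, d\Omega$. By the divergence theorem on the manifold $\Omega$ this equals a boundary integral, which vanishes because $\Omega$ has no boundary (periodic boundary conditions / closed surface such as the sphere). Hence $\tfrac{dM}{dt}=0$.

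The argument for $S(t) = \int_\Omega hb\, d\Omega$ is identical in structure: differentiate, substitute $(hb)_t = -\nabla\cdot\vb{B}$ from \eqref{eq:buoyancy}, and apply the divergence theorem to conclude $\tfrac{dS}{dt} = -\int_\Omega \nabla\cdot\vb{B}\, d\Omega = 0$. Note that no use is made of the specific forms $\vb{F}=h\vb{u}$ or $\vb{B}=hb\vb{u}$ — only that the prognostic equations for $h$ and $hb$ are in flux-divergence form — so the result is robust and this is precisely the structure the discrete scheme will later be designed to mimic with summation-by-parts operators.

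There is essentially no serious obstacle here; the only technical point worth stating carefully is the version of the divergence theorem used, since $\Omega$ is a $2$D manifold embedded in $\mathbb{R}^3$ rather than a flat domain. I would either invoke Stokes' theorem for the intrinsic surface divergence (with the boundary term absent because $\partial\Omega = \emptyset$), or, in the curvilinear-coordinate representation that the rest of the paper uses, write $\nabla\cdot\vb{F}$ in terms of the metric Jacobian and observe that its integral telescopes to zero under periodicity. Either way the computation is routine, and the proof is short.
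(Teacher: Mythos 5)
Your proposal is correct and follows essentially the same route as the paper: integrate \eqref{eq:mass} and \eqref{eq:buoyancy} over $\Omega$, use the divergence theorem to reduce to boundary terms, and observe that these vanish by periodicity. Your added remarks (that only the flux-divergence form matters, and the care about the surface divergence theorem) are sensible but do not change the argument.
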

\begin{proof}
Integrating the continuity equation \eqref{eq:mass} in space over the domain $\Omega$, gives
\begin{equation}
    \frac{d M}{dt} = \frac{d }{dt} \int_\Omega{h d\Omega} = -\int_\Omega{\nabla \cdot \vb{F}d\Omega} = -\int_{\partial\Omega}{\vb{F}\cdot \vb{n}dl} = 0\text{,}
\end{equation}
where $\vb{n}$ is the unit normal of the boundary. The final step results from cancellation across periodic boundaries.
\\
\\
Similarly, we integrate the buoyancy equation \eqref{eq:buoyancy} over the domain $\Omega$
\begin{equation}
    \frac{d S}{dt} = \frac{d }{dt}\int_\Omega{(hb) d\Omega} = -\int_\Omega{\nabla \cdot \vb{B}d\Omega} = -\int_{\partial\Omega}{\vb{B}\cdot \vb{n}dl} = 0\text{.}
\end{equation}
As before, because of the periodic boundary conditions, the boundary terms cancel out. The proof is complete.
\end{proof}

\subsection{Conservation of total vorticity}

We can derive an evolution equation for the vorticity $\omega$ by applying $\vb{k} \cdot \nabla \cross$ to the momentum equation \eqref{eq:momentum} giving

\begin{equation}\label{eq:vorticity_continuity}
	\omega_t + \nabla \cdot (\omega\vb{u}) + \tfrac{1}{2} \vb{k}  \cdot (\nabla b \cross\left(\nabla h\right))  = 0\text{.}
\end{equation}
The evolution equation \eqref{eq:vorticity_continuity} for local absolute vorticity $\omega$ satisfies a conservation law with a source term. Unlike the RSW \cite{ricardo2023conservation}, this implies, for the TRSW vorticity can be locally generated or dissipated. However, for constant buoyancy $b \equiv \emph{const.}$, the source term will vanish and we will regain the RSW \cite{ricardo2023conservation} which locally conserves absolute vorticity.

We will show below that for the TRSW \eqref{eq:momentum}--\eqref{eq:buoyancy} total absolute vorticity is globally conserved.
\begin{theorem}\label{theo:conservation_of_vorticity}
Consider the TRSW \eqref{eq:momentum}--\eqref{eq:buoyancy} on a 2D manifold $\Omega$ embedded in $R^3$ with periodic boundary conditions. At time $t \ge 0$, let the total absolute vorticity be denoted by $\mathcal{W}(t) = \int_\Omega{\omega d\Omega}$, where $\omega$ is the absolute vorticity defined in \eqref{eq:fluxes_and_vorticity}. If the Coriolis frequency $f$ is time-invariant then we have
\begin{align}
    \frac{d \mathcal{W}(t)}{dt} = 0,  \quad \forall t\ge 0.
\end{align}
\end{theorem}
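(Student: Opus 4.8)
The plan is to integrate the local vorticity balance \eqref{eq:vorticity_continuity} over $\Omega$ and to observe that every term on the right-hand side is either a divergence or a surface-curl, so that periodicity annihilates it. Since $f$ is assumed time-invariant, $\frac{d\mathcal{W}}{dt} = \int_\Omega \omega_t\, d\Omega$, and inserting \eqref{eq:vorticity_continuity} gives
\begin{equation*}
    \frac{d\mathcal{W}}{dt} = -\int_\Omega \nabla\cdot(\omega\vb{u})\, d\Omega - \tfrac{1}{2}\int_\Omega \vb{k}\cdot\left(\nabla b \cross \nabla h\right) d\Omega .
\end{equation*}
The transport term is handled exactly as in the mass-conservation proof: $\int_\Omega \nabla\cdot(\omega\vb{u})\, d\Omega = \int_{\partial\Omega} \omega\,\vb{u}\cdot\vb{n}\, dl = 0$ by cancellation across the periodic boundaries.

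The substantive step is the baroclinic source term. The key observation is that its integrand is an exact surface-curl: because $\nabla\cross\nabla h = 0$, the product rule gives $\nabla b \cross \nabla h = \nabla\cross(b\,\nabla h)$, hence $\vb{k}\cdot(\nabla b\cross\nabla h) = \vb{k}\cdot\nabla\cross(b\,\nabla h)$. Invoking Stokes' theorem on the manifold then yields $\int_\Omega \vb{k}\cdot\nabla\cross(b\,\nabla h)\, d\Omega = \int_{\partial\Omega} b\,\nabla h \cdot d\vb{l} = 0$, again by periodicity. Adding the two vanishing contributions gives $\frac{d\mathcal{W}}{dt} = 0$.

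I expect the only real obstacle to be recognizing the baroclinic term as a curl (equivalently, the plane-divergence of the rotated field $b\,\vb{k}\cross\nabla h$) so that the boundary-cancellation mechanism of the earlier proofs applies; once that is seen, the computation is routine. It is also worth flagging where the hypothesis on $f$ enters: $\omega$ carries the Coriolis contribution $f$, so $\omega_t = (\vb{k}\cdot\nabla\cross\vb{u})_t$, and hence \eqref{eq:vorticity_continuity} holds as stated, precisely when $f_t = 0$; otherwise an irreducible term $\int_\Omega f_t\, d\Omega$ would survive and the total vorticity would drift.
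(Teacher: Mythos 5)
Your argument is correct, but it takes a different route from the paper. You integrate the derived vorticity balance \eqref{eq:vorticity_continuity} and must therefore dispose of two terms: the transport flux $\nabla\cdot(\omega\vb{u})$ (divergence theorem plus periodicity) and the baroclinic source $\tfrac{1}{2}\vb{k}\cdot(\nabla b\cross\nabla h)$, which you correctly recognize as the exact surface-curl $\vb{k}\cdot\nabla\cross(b\,\nabla h)$ so that Stokes' theorem kills it. The paper never touches the evolution equation: it simply writes $\mathcal{W}=\int_\Omega\bigl(\vb{k}\cdot\nabla\cross\vb{u}+f\bigr)\,d\Omega$, applies Stokes' theorem to the curl term to get a vanishing boundary integral, and concludes the stronger identity $\mathcal{W}(t)=\int_\Omega f\,d\Omega$ for all $t$, from which conservation is immediate when $f$ is time-invariant. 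The paper's route is shorter, avoids any reliance on the correctness of \eqref{eq:vorticity_continuity} (which is itself only stated, not derived, and implicitly assumes $f_t=0$, as you rightly flag), and yields the exact value of the conserved quantity; your route has the compensating merit of explaining \emph{why} the source term that can locally generate or destroy vorticity integrates to zero globally, namely that it is an exact curl. Both are valid proofs of the theorem.
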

\begin{proof}
Defining $\omega = \vb{k} \cdot \nabla \cross\vb{u} + f$ and 
integrating it  in space, over the spatial  domain $\Omega$, gives
\begin{equation}
\begin{split}
         \mathcal{W} =  \int_\Omega{\omega d\Omega} = \int_\Omega{\left(\vb{k} \cdot \nabla \cross\vb{u} + f\right)d\Omega} = \int_{\partial\Omega}{\vb{k} \cdot\left(\vb{u}\cross \vb{n}\right)dl} + \int_\Omega{f d\Omega} 
\end{split}
\end{equation}
where $\vb{n}$ is the unit normal of the boundary ${\partial\Omega}$.
On the periodic boundaries, the boundary terms cancel out and we have
\begin{equation*}
\begin{split}
         \mathcal{W} =  \int_\Omega{f d\Omega}. 
\end{split}
\end{equation*}
Therefore if $f$ is time-invariant, then we have
\begin{equation*}
\begin{split}
         \frac{d \mathcal{W}}{dt} =  0, 
\end{split}
\end{equation*}
 which completes the proof. 
\end{proof}

\subsection{Conservation of energy}
We will now show that the energy in the medium is conserved. To begin, for the TRSW, we introduce the elemental energy $e$ and the total energy $\mathcal{E}(t)$ defined by
\begin{align}
    e = \tfrac{1}{2}h|\vb{u}|^2 + \tfrac{1}{2}h(hb), \quad \mathcal{E}(t) = \int_\Omega{e d\Omega}.
\end{align}
We can derive an evolution  equation for $e$ by taking the functional derivative of $e$ w.r.t. the prognostic variables 
\begin{equation}
    \dfrac{\partial e}{\partial \vb{u}} = \vb{F}\text{,} \quad\dfrac{\partial e}{\partial h} = G\text{,}\quad
    \dfrac{\partial e}{\partial hb} = \tfrac{1}{2}h\text{,}
\end{equation}
and multiplying by \eqref{eq:momentum}--\eqref{eq:buoyancy} respectively. Recalling the definitions of $\vb{F}=h\vb{u}$ and $\vb{B} = b\vb{F}$ we derive the following conservation law for the elemental energy $e$. Consider
\begin{equation}
\begin{split}
    e_t &= \vb{F} \cdot \vb{u}_t + Gh_t + \tfrac{1}{2}h(hb)_t \\
    &= -\vb{F} \cdot \nabla G -\tfrac{1}{2}b\vb{F}\cdot \nabla h -  G \nabla\cdot \vb{F} - \tfrac{1}{2}h \nabla \cdot \vb{B}
    \\
    &= -\nabla \cdot \left(\left(G + \tfrac{1}{2}hb\right)\vb{F} \right)
    \text{,}
\end{split}
\label{eq:energy-evol_0}
\end{equation}
giving
\begin{equation}
\begin{split}
    e_t + \nabla \cdot \left(\left(G + \tfrac{1}{2}hb\right)\vb{F} \right)= 0
    \text{,}
\end{split}
\label{eq:energy-evol}
\end{equation}
where we have used $\vb{F} \cdot \omega \vb{k} \cross \vb{u} = h\vb{u} \cdot \omega \vb{k} \cross \vb{u} = 0$. As before, global conservation directly follows from the local conservation law \eqref{eq:energy-evol}.
\begin{theorem}\label{theo:conservation_of_energy}
Consider the TRSW \eqref{eq:momentum}--\eqref{eq:buoyancy} on a 2D manifold $\Omega$ embedded in $R^3$ with periodic boundary conditions. At time $t \ge 0$, let the total energy be denoted by $\mathcal{E}(t) = \int_\Omega{ed\Omega}$. We have
\begin{align}
    \frac{d \mathcal{E}}{dt} = 0, \quad \forall t\ge 0.
\end{align}

\end{theorem}
\begin{proof}
Integrating the energy evolution equation \eqref{eq:energy-evol} in space, over the domain $\Omega$, gives
\begin{equation}
    \frac{d \mathcal{E}}{dt} = \frac{d }{dt} \int_\Omega{ed\Omega} = -\int_\Omega{\nabla \cdot \big(G+hb\big)\vb{F}d\Omega} = -\int_{\partial\Omega}{\big(G+hb\big)\vb{F}\cdot \vb{n}dl} = 0\text{,}
\end{equation}
where $\vb{n}$ is the unit normal of the boundary. The final step results from cancellation across periodic boundaries. The proof is complete.
\end{proof}

\subsection{Entropy}

Here we will show that elemental buoyancy variance, 
\begin{equation}\label{eq:buoyancyvariance}
\zeta=\tfrac{1}{2}hb^2 = \tfrac{1}{2}\dfrac{(hb)^2}{h},
\end{equation}
is an entropy function of the TRSW with our choice of prognostic variables, that is it is conserved and a convex function of the prognostic variables \cite{leveque1992numerical}. The importance of entropy functions is that they are conserved in smooth regions and are dissipated across discontinuities in physically relevant weak solutions. DG methods are continuous within each element but can be discontinuous across element boundaries. This motivates that the volume terms of DG methods should conserve entropy while numerical fluxes should dissipate entropy. We refer to  this as local entropy stability and in practice many numerical methods have demonstrated that this is necessary for numerical stability \cite{giraldo2020introduction, gassner2016well, morinishi2010skew, waruszewski2022entropy}.

As with the energy analysis in section 2.4, we first show that the evolution of the elemental buoyancy variance, $\zeta = \tfrac{1}{2}hb^2$, satisfies a conservation law before proving global conservation. The functional derivatives of $\zeta$ w.r.t. the prognostic variables are
\begin{equation}
    \dfrac{\partial \zeta}{\partial \vb{u}} = 0\text{,}\quad \dfrac{\partial \zeta}{\partial h} = -\tfrac{1}{2}b^2\text{,}\quad \dfrac{\partial \zeta}{\partial hb} = b\text{,}
\end{equation}
multiplying these by \eqref{eq:mass}--\eqref{eq:buoyancy} gives the evolution of $\zeta$ as \begin{equation}
\begin{split}
    \zeta_t &= -\tfrac{1}{2}b^2 h_t + b(hb)_t \\
    &= \tfrac{1}{2}b^2 \nabla \cdot \vb{F} - b\nabla \cdot \vb{B} \\ 
    &= \tfrac{1}{2}b^2 \nabla \cdot \vb{F} - \tfrac{1}{2}b^2\nabla \cdot \vb{F} - \tfrac{1}{2}\vb{B}\cdot \nabla b - \tfrac{1}{2} b\nabla \cdot \vb{B} \\ 
    &= - \tfrac{1}{2}\vb{B}\cdot \nabla b - \tfrac{1}{2} b\nabla \cdot \vb{B} \\ 
    &= -\tfrac{1}{2} \nabla \cdot b\vb{B}\text{,}
\end{split}
\label{eq:bvar-evo_0}
\end{equation}
with the required conservation law form
\begin{equation}
\begin{split}
    \zeta_t + \nabla \cdot \left(\frac{b\vb{B}}{2}\right) =0\text{.}
\end{split}
\label{eq:bvar-evo}
\end{equation}
 Note that this result \eqref{eq:bvar-evo} relies on the chain rule $\nabla\cdot \vb{B}=b\nabla \cdot \vb{F} + \vb{B}\cdot \nabla b$ which we can be difficult to mimic discretely with a DG-SEM scheme because of under integration and discontinuous function spaces. This motivates the operator splitting presented in the following section. 

Global conservation follows from the local conservation law, which we present in the following theorem.
\begin{theorem}\label{theo:conservation_of_entropy}
Consider the TRSW \eqref{eq:momentum}--\eqref{eq:buoyancy} on a 2D manifold $\Omega$ embedded in $R^3$ with periodic boundary conditions. At time $t \ge 0$, let the total buoyancy variance be denoted by $\mathcal{A}(t) = \int_\Omega{\zeta\Omega}$. We have
\begin{align}
    \frac{d \mathcal{A}}{dt} = 0, \quad \forall t\ge 0.
\end{align}

\end{theorem}
\begin{proof}
Integrating the buoyancy variance evolution equation \eqref{eq:bvar-evo} in space over the domain $\Omega$ gives
\begin{equation}
    \frac{d \mathcal{A}}{dt} = \frac{d }{dt} \int_\Omega{\zeta d\Omega} = -\int_\Omega{\tfrac{1}{2}\nabla \cdot b\vb{B}d\Omega} = -\int_{\partial\Omega}{\tfrac{1}{2}b\vb{B}\cdot \vb{n}dl} = 0\text{,}
\end{equation}
where $\vb{n}$ is the unit normal of the boundary. The final step results from cancellation across periodic boundaries. The proof is complete.
\end{proof}

We now prove that $\zeta$ is convex, and hence it is an entropy function.
\begin{theorem}
    The elemental buoyancy variance, $\zeta=\tfrac{1}{2}hb^2$, is a convex combination of the prognostic variables $h$ and $hb$.
\end{theorem}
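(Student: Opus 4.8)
The plan is to regard $\zeta$ as a function of the two prognostic variables $h$ and $q:=hb$ on the physically admissible set $\{h>0\}$, where it reads $\zeta(h,q)=\tfrac12\,q^2/h$, and to establish joint convexity by showing its $2\times 2$ Hessian is positive semi-definite there. First I would compute the first partials, $\partial_h\zeta=-\tfrac12\,q^2/h^2$ and $\partial_q\zeta=q/h$ (consistent with the identities $\partial\zeta/\partial h=-\tfrac12 b^2$ and $\partial\zeta/\partial(hb)=b$ already recorded above, using $b=q/h$), then the second partials $\partial_{hh}\zeta=q^2/h^3$, $\partial_{hq}\zeta=-q/h^2$, $\partial_{qq}\zeta=1/h$. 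With the Hessian $H$ assembled from these entries, I would verify the principal-minor conditions: the diagonal entries satisfy $q^2/h^3\ge 0$ and $1/h>0$, while $\det H = (q^2/h^3)(1/h)-(q/h^2)^2 = q^2/h^4-q^2/h^4=0\ge 0$; since all principal minors are nonnegative, $H\succeq 0$ on $\{h>0\}$ and hence $\zeta$ is convex.

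A cleaner route, which I would actually present, is to recognise $\zeta$ as the perspective of the scalar convex function $\phi(b)=\tfrac12 b^2$: indeed $\zeta(h,hb)=h\,\phi(hb/h)$, and the perspective transform of a convex function is jointly convex on the half-space $\{h>0\}$. This needs only the standard perspective lemma and no computation, and it makes transparent why $\zeta$ — unlike the energy $e$ — qualifies as an entropy in the sense of \citep{leveque1992numerical}.

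The only point to flag, rather than a genuine obstacle, is that the result is sensitive to the choice of prognostic variables and to the domain: as a function of $(h,b)$ the same quantity $\tfrac12 hb^2$ is \emph{not} convex, so the mass-weighted variable $hb$ is essential, as is the standing modelling assumption $h>0$. I would also note that $H$ is positive semi-definite but not definite — its determinant vanishes identically — so $\zeta$ is convex but not strictly convex; this still suffices for the entropy-stability arguments used later in the paper, so nothing further is required.
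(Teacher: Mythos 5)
Your proposal is correct, and both of your routes reach the paper's conclusion. Your first route is essentially the paper's own argument: the Hessian you compute in the variables $(h,q)$ with $q=hb$ is exactly the paper's $H_\zeta = \tfrac{1}{h}\bigl(\begin{smallmatrix} b^2 & -b \\ -b & 1\end{smallmatrix}\bigr)$ after substituting $b=q/h$; the only difference is that you certify positive semi-definiteness by checking that all principal minors (both diagonal entries and the determinant) are nonnegative, whereas the paper computes the eigenvalues $0$ and $(1+b^2)/h$ explicitly. For a $2\times 2$ symmetric matrix these are equivalent certificates, and both require the standing assumption $h>0$, which you rightly make explicit and the paper leaves implicit. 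Your second route — recognising $\zeta(h,hb)=h\,\phi(hb/h)$ as the perspective of the convex function $\phi(b)=\tfrac12 b^2$ — is genuinely different and arguably preferable: it avoids the Hessian computation entirely, immediately generalises to the variance $\rho q^2$ of any advected quantity (the content of the paper's subsequent Remark, which the perspective argument delivers for free), and makes structurally clear why the mass-weighted variable $hb$ rather than $b$ must be taken as prognostic. Your observations that $\zeta$ is not convex in $(h,b)$ and that the convexity is only semi-definite (degenerate along the ray where $\det H=0$) are both accurate and worth retaining; neither affects the entropy-stability arguments later in the paper. The one cosmetic point is that the theorem as stated says ``convex combination'' where ``convex function'' is meant; your proof correctly establishes the latter.
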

\begin{proof}
    As the functional derivative of $\zeta$ w.r.t. $\vb{u}$ is $0$ we need only considers the Hessian of $\zeta$ w.r.t. $h$ and $hb$, giving
    \begin{equation}
        H_\zeta = \frac{1}{h}\begin{pmatrix}
            b^2 &  -b\\ 
            -b & 1
        \end{pmatrix}\text{,}
    \end{equation}
    with the characteristic equation
    \begin{equation}
        \lambda \big(\lambda - \frac{1+b^2}{h}\big) = 0\text{,}
    \end{equation}
    and eigenvalues $\lambda = 0, \dfrac{1+b^2}{h}$. Therefore $H_\zeta$ is positive semi-definite and $\zeta$ is convex.
\end{proof}

This analysis is quite general and applies to any flow with a mass continuity equation. 

\begin{remark}
We note that the entropy functional $\zeta=\tfrac{1}{2}hb^2$ controls only the fluid height $h$ and the buoyancy $hb$, but it does not control the flow velocity $\mathbf{u}$. However, numerical experiments performed later in this study demonstrate that an entropy stable numerical method is critical to keep numerical simulations accurate and stable for sufficiently long times.
\end{remark}

\begin{remark}
    For any fluid flow with a mass continuity equation, the variance $\rho q^2$ of any advected quantity $q$, where $\rho$ is density, is an entropy function if $\rho q$ is chosen to be a prognostic variable.
\end{remark}
This implies that our discrete formulation of the mass and mass-weighted buoyancy equation can be applied to conservatively and stably advect any quantity. As an example, potential enstrophy is an entropy of the shallow water equations with height and vorticity as prognostic variables, so our method could be used to advect vorticity while conserving potential enstrophy.

\subsection{Split form}

As mentioned in the previous section, the entropy conservation proof crucially relies on $\nabla \cdot \vb{B}=b\nabla \cdot \vb{F} + \vb{B}\cdot \nabla b$ which we cannot discretely mimic due to under-integration and discontinuous function spaces in DG-SEM. Following previous work on operator splitting for computational fluid dynamics \cite{gassner2016well, pirozzoli2010generalized, ducros2000high, morinishi2010skew, gassner2016split}, we equivalently reformulate the mass-weighted buoyancy evolution equation as
\begin{equation}
    (hb)_t + \tfrac{1}{2}\nabla \cdot \vb{B} + \tfrac{1}{2}\big(b\nabla \cdot \vb{F} + \vb{F} \cdot \nabla b\big) = 0\text{.}
    \label{eq:b-split}
\end{equation}
This introduces the required splitting directly into the mass-weighted buoyancy evolution equation and enables the entropy conservation proof to proceed without invoking the chain rule. The working in \eqref{eq:bvar-evo} now becomes
\begin{equation}
\begin{split}
    \zeta_t &= -\tfrac{1}{2}b^2 h_t + b(hb)_t \\
    &= \tfrac{1}{2}b^2 \nabla \cdot \vb{F} - \tfrac{1}{2}b^2\nabla \cdot \vb{F} - \tfrac{1}{2}b\vb{F}\cdot \nabla b - \tfrac{1}{2} b\nabla \cdot \vb{B} \\ 
    &= - \tfrac{1}{2}\vb{B}\cdot \nabla b - \tfrac{1}{2} b\nabla \cdot \vb{B}\text{,}
\end{split}
\end{equation}
with
\begin{equation}
\begin{split}
    \zeta_t + \tfrac{1}{2}\vb{B}\cdot \nabla b + \tfrac{1}{2} b\nabla \cdot \vb{B} =0\text{.}
\end{split}
\end{equation}
Entropy conservation follows from integration by parts over a periodic domain
\begin{equation}
    Z_t = \int_{\Omega}{\zeta_td\Omega} = -\tfrac{1}{2}\int_{\Omega}{(\vb{B}\cdot \nabla b + b\nabla \cdot \vb{B})d\Omega} = -\int_{\partial\Omega}{\tfrac{1}{2} b\vb{B}\cdot \vb{n} dl} = 0\text{.}
\end{equation}

The splitting of \eqref{eq:buoyancy} complicates the proof of energy conservation and necessitates additional operator splitting in the velocity equation. Recalling that $\frac{\partial e}{\partial hb} = \tfrac{1}{2}h$, with \eqref{eq:b-split} the energy exchange associated with the  mass-weighted buoyancy equation is expressed as
\begin{equation}
    \tfrac{1}{2}h(hb)_t = -\tfrac{1}{4}h \big(\nabla \cdot \vb{B} + b\nabla \cdot \vb{F} + \vb{F} \cdot \nabla b \big)\text{,}
\end{equation}
which is balanced with the energy exchange corresponding to this term in the
\begin{equation}
    -\tfrac{1}{2}b\vb{F}\cdot \nabla h
\end{equation}
kinetic energy evolution equation $\frac{\partial e}{\partial \vb{u}}\cdot \vb{u}_t= \vb{F}\cdot \vb{u}_t$.
In contrast to the energy conservation proof in section 2.4, with \eqref{eq:b-split} an energy conservation proof would now require the chain rule. This motivates splitting $b\nabla h$ in \eqref{eq:momentum} as
\begin{equation}
    b\nabla h = \tfrac{1}{2}b\nabla h -\tfrac{1}{2}(\nabla(hb) - h\nabla b)\text{,}
\end{equation} which again enables the energy conservation proof to proceed without the chain rule. The split-form of the TRSW considered by our DG method in the coming sections can now be formulated as
\begin{equation}
	\vb{u}_t + \omega\vb{k}\cross\vb{u} + \nabla G +\tfrac{1}{4}(b\nabla h + \nabla(hb) - h\nabla b)= 0\text{,}
	\label{eq:momentum1}
\end{equation}
\begin{equation}
	h_t + \nabla \cdot \vb{F} = 0\text{,}
	\label{eq:mass1}
\end{equation}
\begin{equation}
	(hb)_t + \tfrac{1}{2}\big(\nabla \cdot \vb{B} + b\nabla \cdot \vb{F} + \vb{F} \cdot \nabla b\big) = 0\text{.}
	\label{eq:buoyancy1}
\end{equation}
with
\begin{equation}\label{eq:fluxes_and_vorticity1}
	\omega = \vb{k} \cdot \nabla \cross \vb{u} + f\text{,} \quad \vb{B} = hb\vb{u}\text{,} \quad \vb{F} = h\vb{u}\text{,} \quad G = \tfrac{1}{2}\vb{u}\cdot\vb{u} + \tfrac{1}{2}ghb\text{.}
\end{equation}
\begin{remark}
    It is significantly noteworthy that the theoretical results proved in Theorems
\ref{theo:conservation_of_mass_bouyancy}-\ref{theo:conservation_of_entropy} are also applicable to the split-form of the TRSW \eqref{eq:momentum1}--\eqref{eq:buoyancy1}. However, the corresponding analysis circumvents the chain rule and requires only integration by parts which we can mimic discretely in a DG frame-work. 
\end{remark}

\section{The DG method}

In this section we briefly describe our discrete spaces before presenting our discrete method. 

\subsection{Spaces}
We decompose the domain into quadrilateral elements $\Omega^m$. For each element we define an invertible map to the reference square $r(\vb{x};m): \Omega^m \rightarrow [-1, 1]^2$, and approximate solutions by polynomials of order $m$ in this reference square. We use a computationally efficient tensor product Lagrange polynomial basis with interpolation points collocated with Gauss-Lobatto-Legendre (GLL) quadrature points. Using this basis the polynomials of order $p$ on the reference square can be defined as
\begin{equation}
    P_p := \text{span}_{i, j=1}^{p+1} l_i(\xi) l_j(\eta)\text{,}
\end{equation}
where $(\xi, \eta) \in [-1, 1]^2$ and $l_i$ is the 1D Lagrange polynomial which interpolates the $i^{th}$ GLL node. We define a discontinuous scalar space $S$ which contains the depth $D^h$ as
\begin{equation}
    S = \{\phi \in L^2(\Omega) : \phi\left(r^{-1}\left(\xi, \eta; m\right)\right) \in P_p, \forall m \} \text{.}
\end{equation}
Note that within each element functions $\phi \in S$ can be expressed as
\begin{equation}
    \phi(\vb{x}) = \sum_{i,j=1}^{p+1}{\phi_{ijm}l_i(\xi)l_j(\eta)}, \forall \vb{x} \in \Omega^m\text{,}
\end{equation}
where $\xi, \eta = r(\vb{x}; m)$, $\phi_{ijm} = \phi(r^{-1}(\xi_i, \eta_j;m))$.

We then define a discontinuous vector-valued function space as a tensor product of $S$. Let $\vb{v}_1(x,y,z)$ and $\vb{v}_2(x,y,z)$ be any independent set of vectors which span the tangent space of the manifold at each point $(x,y,z)$, then the discontinuous vector space $V$ containing the velocity $\vb{u}$ can be defined as
\begin{equation}
    V = \{\vb{w} : \vb{w} \cdot \vb{v}_i \in S, i=1,2 \}\text{.}
\end{equation}
We note that $V$ is independent of the particular choice of $\vb{v}_i$ however two convenient choices are the contravariant and covariant basis vectors. 

\subsection{Operators}

Before defining our discrete operators we first introduce the covariant vectors
\begin{equation}
    \vb{g}_1 = \dfrac{\partial \vb{x}}{\partial \xi}, \quad \vb{g}_2 = \dfrac{\partial \vb{x}}{\partial \eta}\text{,}
\end{equation}
and the contravariant vectors
\begin{equation}
    \vb{g}^1 = \nabla \xi, \quad \vb{g}_2 = \nabla \eta \text{.}
\end{equation}
With these, the determinant of the Jacobian of $r(x; m)$ is given by
\begin{equation}
    J = |\vb{g}_1 \times \vb{g}_2|\text{.}
\end{equation}

Following \cite{taylor2010compatible}, we use the covariant and contravariant vectors to discretise the divergence, gradient, and curl as
\begin{equation}
    \nabla \cdot \vb{w} = \dfrac{1}{J}\bigg(\dfrac{\partial Jw^1}{\partial \xi} + \dfrac{\partial Jw^2}{\partial \eta} \bigg)\text{,}
    \label{eq:div}
\end{equation}
\begin{equation}
    \nabla \phi = \dfrac{\partial \phi}{\partial \xi} \vb{g}^1 + \dfrac{\partial \phi}{\partial \eta} \vb{g}^2\text{,} 
    \label{eq:grad}
\end{equation}
\begin{equation}
    \nabla \cross \vb{w} = \dfrac{1}{J}\bigg(\dfrac{\partial w_2}{\partial \eta} - \dfrac{\partial w_1}{\partial \xi} \bigg)\vb{k}\text{,} 
    \label{eq:curl}
\end{equation}
\begin{equation}
    \nabla \cross \phi \vb{k} = \dfrac{1}{J}\dfrac{\partial \phi}{\partial \eta}\vb{g}_1 - \dfrac{1}{J}\dfrac{\partial \phi}{\partial \xi} \vb{g}_2 \text{,} 
    \label{eq:curl2}
\end{equation}
where we have only used $\nabla \cross$ on quantities either parallel or orthogonal to $\vb{k}$ to simplify the exposition.

\subsection{Integrals}

We approximate integrals over the elements by using GLL quadrature. This defines a discrete element inner product
\begin{equation}
    \left\langle f, g \right\rangle_{\Omega^m} := \sum_{i, j=1}^{p+1}{w_i w_j J_{ij} f_{ij}g_{ij}}\text{,}
\end{equation}
where $w_i$ are the 1D quadrature weights. This approximates the integral
\begin{equation}
    \int_{\Omega^m}{fg d\Omega^m} \approx \left\langle f, g \right\rangle_{\Omega^m}\text{.}
\end{equation}
The discrete global inner product is defined simply as $\left\langle f, g \right\rangle_{\Omega} = \sum_m{\left\langle f, g \right\rangle_{\Omega^m}}$. Similarly for vectors 
\begin{equation}
    \left\langle \vb{f}, \vb{g} \right\rangle_{\Omega^m} := \sum_{i, j=1}^{p+1}{w_i w_j J_{ij} \vb{f}_{ij} \cdot \vb{g}_{ij}}\text{.}
\end{equation}
We also approximate element boundary integrals using GLL quadrature
\begin{equation}
    \int_{\partial \Omega^m}{\vb{f}\cdot\vb{n}dl} \approx \left\langle \vb{f}, \vb{n}\right\rangle_{\partial\Omega^m}\text{,}
\end{equation}
where 
\begin{equation}
\begin{split}
    \left\langle \vb{f}, \vb{n}\right\rangle_{\Omega^m} := \sum_{j}{w_j\big(|g_1|_{1j} \vb{f}_{1j}\cdot \vb{n}_{1j} + |g_1|_{(p+1)j} \vb{f}_{(p+1)j}\cdot \vb{n}_{(p+1)j}} \\ {+ |g_2|_{j1}\vb{f}_{j1}\cdot \vb{n}_{j1} + |g_2|_{j(p+1)}\vb{f}_{j(p+1)}\cdot \vb{n}_{j(p+1)}\big)}\text{,}
\end{split}
\end{equation}
and $\vb{n}$ is the unit normal of the element boundary $\partial \Omega^m$. Similarly, we use $\vb{t}$ to denote the tangent vectors to the element boundaries, and we also use the boundary jump notation
\begin{equation}
    \average{a} = \tfrac{1}{2}(a^+ + a^-)\text{,}
\end{equation}
\begin{equation}
    \jump{a} = a^+ - a^-\text{,}
\end{equation}
where $+$ and $-$ superscripts are arbitrary and denote quantities on opposite sides of the element boundary.

With the discrete setup defined, we now present the following lemma which is crucial to our discrete stability and conservation proofs.
\begin{lemma}
The discrete spaces satisfy an element-wise SBP property.
\begin{equation}
    \left\langle \phi, \nabla_d\cdot \vb{w}\right\rangle_{\Omega^m} + \left\langle \nabla_d\phi, \vb{w}\right\rangle_{\Omega^m} = \left\langle \phi\vb{w}, \vb{n}\right\rangle_{\partial\Omega^m}\text{,}\; \forall \phi \in S, \vb{w} \in V\text{.}
\end{equation}
\end{lemma}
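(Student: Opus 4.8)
### Proof Proposal

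The plan is to reduce the multidimensional SBP identity to the classical one-dimensional summation-by-parts property of the GLL nodal differentiation matrix, applied separately in the $\xi$ and $\eta$ directions, and then reassemble the pieces using the metric identities. First I would unfold the definitions: substitute the discrete divergence \eqref{eq:div} and discrete gradient \eqref{eq:grad} into the two volume inner products. Writing $Jw^1$ and $Jw^2$ for the contravariant flux components (these are polynomials in $P_p$ since $V$ is built as a tensor product of $S$), the first term $\langle \phi, \nabla_d\cdot\vb{w}\rangle_{\Omega^m}$ becomes $\sum_{ij} w_i w_j \phi_{ij}\big(\partial_\xi(Jw^1) + \partial_\eta(Jw^2)\big)_{ij}$ — note the $1/J$ from the divergence cancels the $J_{ij}$ in the quadrature weight. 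The second term $\langle \nabla_d\phi, \vb{w}\rangle_{\Omega^m}$ expands, using $\vb{w}\cdot\vb{g}^k = w^k$, into $\sum_{ij} w_i w_j J_{ij}\big((\partial_\xi\phi) w^1 + (\partial_\eta\phi) w^2\big)_{ij}$.

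The key step is then the one-dimensional GLL-SBP property: the nodal differentiation matrix $D$ and the diagonal quadrature-weight matrix $W = \mathrm{diag}(w_i)$ satisfy $WD + (WD)^T = \mathrm{diag}(-1, 0,\dots,0,1) =: B$, equivalently $\sum_i w_i a_i (Db)_i + \sum_i w_i (Da)_i b_i = (a b)|_{\text{right}} - (a b)|_{\text{left}}$ for the Lagrange interpolants $a,b$ of grid data. This is the standard property of the GLL collocation operator (see \citep{gassner2013skew}), valid because GLL quadrature is exact for polynomials of degree $2p-1$ and includes the endpoints. Applying this in the $\xi$-direction (for each fixed $j$) to the pair $\phi$ and $Jw^1$, and in the $\eta$-direction (for each fixed $i$) to the pair $\phi$ and $Jw^2$, the sum of the two volume terms collapses to boundary contributions: $\sum_j w_j\big[(\phi Jw^1)_{(p+1)j} - (\phi Jw^1)_{1j}\big] + \sum_i w_i\big[(\phi Jw^2)_{i(p+1)} - (\phi Jw^2)_{i1}\big]$.

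The final step is to identify these boundary sums with $\langle\phi\vb{w},\vb{n}\rangle_{\partial\Omega^m}$. On the $\xi = \pm1$ faces the outward normal is $\pm\vb{g}^1/|\vb{g}^1|$ and, by the usual metric relation, $J|\vb{g}^1|$ equals the face line element, which matches $|\vb{g}_2|$ in the definition of $\langle\cdot,\cdot\rangle_{\partial\Omega^m}$ (the face spanned by $\vb{g}_2$); hence $Jw^1 = J\,\vb{w}\cdot\vb{g}^1 = |\vb{g}_2|\,\vb{w}\cdot\vb{n}$ up to sign on that face, and similarly $Jw^2$ on the $\eta$-faces. Matching signs with the outward orientation then reproduces exactly the four-term boundary quadrature. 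The main obstacle I anticipate is bookkeeping rather than conceptual: correctly aligning the metric factors and orientations so that the contravariant flux components appearing after the 1D SBP step coincide with the $|\vb{g}_i|\,\vb{w}\cdot\vb{n}$ weights in the stated boundary inner product, and making sure the argument is applied only where the relevant products lie in the polynomial degree range for which GLL quadrature is exact (which holds here since $\phi, Jw^k \in P_p$ so each product is degree $\le 2p$ in one variable — one must invoke the standard fact that the SBP identity still holds for the collocation derivative despite GLL exactness stopping at $2p-1$, because the boundary term exactly absorbs the aliasing error).
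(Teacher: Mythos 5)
Your argument is correct, and it is the standard proof of this fact: the paper itself does not prove the lemma but simply defers to \citep{taylor2010compatible}, where essentially the same reduction to the one-dimensional GLL summation-by-parts identity $WD + (WD)^T = B$ is carried out direction by direction, followed by the metric identification of the resulting boundary sums. So you have supplied the content that the paper leaves to a citation, by the expected route.

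Two small points. First, your closing worry about aliasing is a non-issue: with $a,b$ the degree-$p$ interpolants, the collocation derivative is exact on them, so $a_i(Db)_i + (Da)_i b_i = (ab)'(\xi_i)$ and $(ab)'$ has degree $2p-1$, which $(p+1)$-point GLL quadrature integrates exactly; equivalently, $WD+(WD)^T=B$ is an exact matrix identity, so nothing needs to ``absorb'' an aliasing error. Second, your metric identification is right --- on the $\xi=\pm1$ faces one has $\vb{g}^1 = J^{-1}\,\vb{g}_2\cross\vb{k}$, hence $J|\vb{g}^1| = |\vb{g}_2|$ and $Jw^1 = |\vb{g}_2|\,\vb{w}\cdot\vb{n}$ up to orientation --- but note that the paper's displayed boundary quadrature attaches $|g_1|$ to the $(1,j)$ and $(p+1,j)$ nodes, which matches your identification only under the opposite index convention (or is a typo in the paper), so the bookkeeping you flag as the main obstacle does need to be pinned to whichever convention is actually in force.
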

\begin{proof}
    See \cite{taylor2010compatible}.
\end{proof}

\subsection{Discrete formulation}

We use the strong form DG-SEM, which has shown to be equivalent to the weak form \cite{kopriva2010quadrature}. Our discrete method is
\begin{equation}
\begin{split}
	\left\langle \vb{w}, \vb{u}_t\right\rangle_{\Omega^m} + \left\langle \vb{w}, \omega \vb{k}\cross \vb{u}^{h} + \nabla_d G + \tfrac{1}{4}(b\nabla_d h + \nabla_d (hb) - h\nabla_d b)\right\rangle_{\Omega^m}  \\
+ \left\langle \vb{w} \cdot \vb{n}, \tfrac{1}{2}\hat{b}(\average{h}-h) + (\hat{G}-G)\right\rangle_{\partial\Omega^m} = 0\text{,}\;\forall \vb{w}\in V\text{,}
\end{split}
\label{eq:dg-velocity}
\end{equation}

\begin{equation}
	\left\langle \phi, h_t\right\rangle_{\Omega^m} + \left\langle \phi, \nabla_d \cdot \vb{F}\right\rangle_{\Omega^m} + \left\langle \phi, \big(\hat{\vb{F}}-\vb{F}\big)\cdot \vb{n}\right\rangle_{\partial\Omega^m} = 0\text{,}\;\forall \phi\in S\text{,}
 \label{eq:dg-depth}
\end{equation}
\begin{equation}
\begin{split}
	\left\langle \psi, (hb)_t\right\rangle_{\Omega^m} &+ \left\langle \psi, \tfrac{1}{2}(b\nabla_d \cdot \vb{F} + \vb{F}\cdot \nabla_d b + \nabla_d \cdot \vb{B})\right\rangle_{\Omega^m} \\ 
 & + \left\langle \psi, \big(\hat{\vb{B}}-\vb{B}\big)\cdot \vb{n}\right\rangle_{\partial\Omega^m} = 0\text{,}\;\forall \psi\in S\text{,}
 \end{split}
 \label{eq:dg-b}
\end{equation}
where the numerical fluxes are defined for the scalars $\alpha, \beta$ as
\begin{equation}
    \hat{G} = \average{ G } + \alpha\jump{\vb{F}}\cdot{\vb{n}^{+}}\text{.}
\end{equation}
\begin{equation}
    \hat{\vb{F}} = \average{ \vb{F} }\text{,}
\end{equation}
\begin{equation}
    \hat{\vb{B}}= \hat{b}\average{ \vb{F} }\text{,}
\end{equation}
\begin{equation}
    \hat{b} = \average{b} + \beta \jump{b}\text{,}
\end{equation}
and the discrete vorticity is 
\begin{equation}
    \left\langle \phi, \omega\right\rangle_{\Omega^m} = \left\langle -\nabla_d \cross \phi\vb{k}, \vb{u}\right\rangle_{\Omega^m} + \left\langle \phi, f\right\rangle_{\Omega^m} + \left\langle \phi, \average{\vb{u}} \cdot \vb{t}\right\rangle_{\partial\Omega^m}\;\forall \phi\in S\text{.}
    \label{eq:dg-vort}
\end{equation}
This builds upon the DG RSW method of \cite{ricardo2023conservation}, with the main contributions being our novel numerical fluxes $\hat{\vb{B}}$ and $\hat{b}$, and the simultaneous splitting of both the divergence operator in \eqref{eq:dg-b} and the $b\nabla h$ term in \eqref{eq:dg-velocity}. Both the fluxes and the particular operator splitting are critical for our conservation and stability proofs. 


Our method is semi-discretely energy and entropy conserving for the choice $\alpha=\beta=0$ which we refer to as the conservative method. The other particular choice of parameters we use is
\begin{equation}\label{eq:alpha_beta}
    \alpha = \frac{1}{2}\max\left(\frac{c^+}{h^+}, \frac{c^-}{h^-}\right)\text{,}\quad \beta = \tfrac{1}{2}\text{sign}(\average{\vb{F}}\cdot\vb{n}^-)\text{,}
\end{equation}
where $c^\pm =|\vb{u}^{\pm}| + \sqrt{gh^{\pm}}$ is the fastest wave speed on either side of the boundary. This parameter choice semi-discretely dissipates entropy and kinetic energy and we refer to this as the dissipative method. As in \cite{ricardo2023conservation}, this choice of $\alpha$ ensures that $\hat{G}$ has the correct units and reduces to a Rusanov flux for the TRSW linearised around a constant state with zero mean flow. The choice of $\beta$ simply upwinds $b$ and is equivalent to 
\begin{equation}
\hat{b} = 
    \begin{cases}
    b^-,& \text{if $\average{\vb{F}} \cdot\vb{n}^- \geq 0$}\\
    b^+,& \text{otherwise.}
    \end{cases}
\end{equation}
We did not a priori choose an unpwinded $\hat{b}$, it was the only option that the authors could find which was consistent, entropy dissipating, and did not spurious generate energy. The requirement of balanced kinetic and potential energy exchanges necessitates that $\vb{\hat{B}} = \hat{b}\average{\vb{F}}$. The role of $\hat{\vb{F}}$ in both kinetic energy and entropy exchanges requires the use of a centred flux. Finally, consistency and entropy dissipation dictate that $\hat{b}$ be upwinded. Consequently, an upwinded $\hat{b}$ appears to be a very natural choice.

\section{Semi-discrete conservation and numerical stability}
In this section we prove some discrete conservation properties and establish the numerical stability of the numerical method \eqref{eq:dg-velocity}--\eqref{eq:dg-vort}. The main results of this paper are presented in this section, including conservation of mass and mass-weighted buoyancy at the discrete level, semi-discrete energy conservation, and numerical entropy stability.

\subsection{Conservation of mass and mass-weighted buoyancy}
Here, we will show that conservation of mass and mass-weighted buoyancy directly follows from the SBP property. First, we define the elemental discrete mass and mass-weighted buoyancy as $\mathcal{M}^m=\left\langle h \right\rangle_{\Omega^m}$ and $\mathcal{B}^m=\left\langle hb \right\rangle_{\Omega^m}$, and the global mass and mass-weighted buoyancy as $\mathcal{M} = \sum_{m}{\mathcal{M}^m}$ and $\mathcal{B} = \sum_m{\mathcal{B}^m}$.     

\begin{theorem}
    The semi-discrete method \eqref{eq:dg-velocity}-\eqref{eq:dg-vort} conserves mass and mass-weighted buoyancy in a periodic domain, that is $\mathcal{M}_t = \mathcal{B}_t = 0$.
\end{theorem}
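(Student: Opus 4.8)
The plan is to test each conservation-law equation against the constant function $1\in S$, use the element-wise SBP property to trade the volume divergence terms for surface terms, and then observe that the resulting surface integrals cancel in pairs over a periodic mesh.

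First I would treat mass. Taking $\phi=1$ in the depth equation \eqref{eq:dg-depth}, and using that the quadrature weights and Jacobian are time-independent so that $\langle 1, h_t\rangle_{\Omega^m}=\mathcal{M}^m_t$, gives $\mathcal{M}^m_t + \langle 1,\nabla_d\cdot\vb{F}\rangle_{\Omega^m} + \langle 1,(\hat{\vb{F}}-\vb{F})\cdot\vb{n}\rangle_{\partial\Omega^m}=0$. Applying the SBP lemma with $\phi=1$, so that $\nabla_d 1=0$, rewrites the volume term as $\langle\vb{F},\vb{n}\rangle_{\partial\Omega^m}$, which cancels the $-\vb{F}$ contribution in the flux term and leaves $\mathcal{M}^m_t = -\langle\hat{\vb{F}},\vb{n}\rangle_{\partial\Omega^m}$. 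Summing over all elements, every interior face (and every periodic wrap-around face) is visited from both adjacent elements with opposite outward normals, $\vb{n}^+=-\vb{n}^-$; since $\hat{\vb{F}}=\average{\vb{F}}$ is single-valued on the face and the boundary quadrature weights and metric factors match across the conforming interface, the two contributions cancel pointwise, so $\mathcal{M}_t=0$.

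For the mass-weighted buoyancy the same strategy works once the split volume term is dealt with. Taking $\psi=1$ in \eqref{eq:dg-b} yields $\mathcal{B}^m_t + \tfrac12\langle 1, b\nabla_d\cdot\vb{F} + \vb{F}\cdot\nabla_d b + \nabla_d\cdot\vb{B}\rangle_{\Omega^m} + \langle 1,(\hat{\vb{B}}-\vb{B})\cdot\vb{n}\rangle_{\partial\Omega^m}=0$. The key point is that $\langle 1, b\nabla_d\cdot\vb{F}\rangle_{\Omega^m}=\langle b,\nabla_d\cdot\vb{F}\rangle_{\Omega^m}$ and $\langle 1,\vb{F}\cdot\nabla_d b\rangle_{\Omega^m}=\langle\nabla_d b,\vb{F}\rangle_{\Omega^m}$, so by the SBP lemma with test function $\phi=b\in S$ their sum equals $\langle b\vb{F},\vb{n}\rangle_{\partial\Omega^m}=\langle\vb{B},\vb{n}\rangle_{\partial\Omega^m}$, while $\langle 1,\nabla_d\cdot\vb{B}\rangle_{\Omega^m}=\langle\vb{B},\vb{n}\rangle_{\partial\Omega^m}$ by SBP with $\phi=1$. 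Thus the whole volume contribution collapses to $\tfrac12\langle\vb{B},\vb{n}\rangle_{\partial\Omega^m}+\tfrac12\langle\vb{B},\vb{n}\rangle_{\partial\Omega^m}=\langle\vb{B},\vb{n}\rangle_{\partial\Omega^m}$, cancelling the $-\vb{B}$ in the flux term and leaving $\mathcal{B}^m_t=-\langle\hat{\vb{B}},\vb{n}\rangle_{\partial\Omega^m}$. Summing over elements, $\hat{\vb{B}}=\hat{b}\,\average{\vb{F}}$ is single-valued on every face ($\average{\vb{F}}$ is symmetric, and for the dissipative choice \eqref{eq:alpha_beta} the sign flip in $\beta$ under $+\!\leftrightarrow\!-$ exactly compensates the sign flip in $\jump{b}$), so the inter-element boundary terms cancel in pairs and $\mathcal{B}_t=0$.

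I do not expect a genuine obstacle: the result is essentially a corollary of the SBP lemma, and the only real insight is recognizing that the split pair $\tfrac12(b\nabla_d\cdot\vb{F}+\vb{F}\cdot\nabla_d b)$ is precisely the combination that SBP, tested against $b$, turns into the clean boundary flux $\tfrac12\langle\vb{B},\vb{n}\rangle_{\partial\Omega^m}$ — which is exactly why the splitting was introduced. The points requiring minor care are that $\vb{F},\vb{B}\in V$ and $b\in S$ in the collocation sense so the lemma is applicable, and that $\hat{\vb{F}}$ and $\hat{\vb{B}}$ are single-valued across all faces, including the periodic ones, so the element sums vanish rather than merely telescoping.
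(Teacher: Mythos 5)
Your proposal is correct and follows essentially the same route as the paper: test against the constant function, apply the element-wise SBP lemma (with $\phi=1$ for the divergence terms and, implicitly in the paper but explicitly in your write-up, with $\phi=b$ for the split pair $b\nabla_d\cdot\vb{F}+\vb{F}\cdot\nabla_d b$), and cancel the remaining single-valued numerical fluxes $\hat{\vb{F}}$ and $\hat{\vb{B}}$ across periodic element interfaces. Your observation that $\hat{b}$ (and hence $\hat{\vb{B}}$) is single-valued even for the upwinded choice, because the sign flip of $\beta$ compensates that of $\jump{b}$, is a detail the paper asserts without comment, so your version is, if anything, more complete.
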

\begin{proof}
    Substituting the test function $\phi=1$ into the mass equation \eqref{eq:dg-depth} and using SBP gives
    \begin{equation}
        \mathcal{M}^m_t = -\left\langle 1, \nabla \cdot \vb{F} \right\rangle_{\Omega^m} - \left\langle 1, \hat{\vb{F}} - \vb{F} \right\rangle_{\partial\Omega^m} =-\left\langle \phi, \hat{\vb{F}} \right\rangle_{\partial\Omega^m} \text{,}
    \end{equation}
    as the domain is periodic and $\hat{\vb{F}}$ is continuous by construction the contributions from neighbouring elements cancel giving
    \begin{equation}
        \mathcal{M}_t = -\sum_m{\left\langle 1, \hat{\vb{F}} \right\rangle_{\partial\Omega^m}} = 0\text{,}
    \end{equation}
    as required.

    Substituting $\psi=1$ into the mass-weighted buoyancy equation \eqref{eq:dg-b} gives
    \begin{equation}
        \mathcal{B}^m_t = \left\langle 1, (hb)_t \right\rangle_{\Omega^m} = -\left\langle \tfrac{1}{2}, \nabla \cdot \vb{B} + b\nabla \cdot \vb{F} + \vb{F}\cdot \nabla b\right\rangle_{\Omega^m} - \left\langle 1, \hat{\vb{B}}-\vb{B}\right\rangle_{\partial\Omega^m}\text{,}
    \end{equation}
    using SBP yields
    \begin{equation}
        \mathcal{B}^m_t = - \left\langle 1, \hat{\vb{B}}\right\rangle_{\partial\Omega^m}\text{,}
    \end{equation}
    and again by periodicity and continuity of $\hat{\vb{B}}$
    \begin{equation}
        \mathcal{B}_t = -\sum_m{\left\langle 1, \hat{\vb{B}} \right\rangle_{\partial\Omega^m}} = 0\text{,}
    \end{equation}
    as required.
\end{proof}
We note that this proof also demonstrates that our scheme locally conserves these quantities.

\subsection{Conservation of entropy}

Here we show that our conservative method semi-discretely conserves entropy, and that both the conservative and dissipative methods are semi-discretely entropy stable. To achieve fully discrete entropy stability, we employ a strong stability preserving (SSP) time integrator \cite{shu1988efficient}. This analysis, similar to the continuous case, is quite general and can be utilized to derive a discretely conservative and stable advection scheme for any advected quantity. First, we define the elemental and global entropy as 
\begin{equation}
    \mathcal{A}^m=\left\langle 1,\frac{(hb)^2}{h}\right\rangle_{\Omega^m}\text{,}\quad \mathcal{A} = \sum_m{\mathcal{A}^m}\text{.}
\end{equation}
 
\begin{theorem}
    The semi-discrete method \eqref{eq:dg-velocity}-\eqref{eq:dg-vort} is entropy stable in a periodic domain, that is $\mathcal{A}_t \leq 0$. Additionally, with a centred $\hat{b} = \average{b}$ the semi-discrete method conserves entropy $\mathcal{A}_t = 0$.  
\end{theorem}
\begin{proof}
    Mirroring the continuous analysis in section 2.4, with the semi-discrete approximation the time derivative of entropy is
    \begin{equation}
        \mathcal{A}^m_t = -\left\langle \tfrac{1}{2}b^2, h_t\right\rangle_{\Omega^m} +\left\langle b, (hb)_t\right\rangle_{\Omega^m} \text{.}
    \end{equation}
Substituting the discrete mass and mass-weighted buoyancy equations \eqref{eq:dg-depth}-\eqref{eq:dg-b} this becomes
\begin{equation*}
\begin{split}
    \mathcal{A}^m_t &= \left\langle \tfrac{1}{2}b^2,\nabla_d  \cdot\vb{F}\right\rangle_{\Omega^m} - \left\langle\tfrac{1}{2}b^2,\nabla_d \cdot  \vb{F}\right\rangle_{\Omega^m} - \left\langle\tfrac{1}{2}\vb{B}, \nabla b\right\rangle_{\Omega^m} - \left\langle\tfrac{1}{2}b\nabla_d,  \vb{B}\right\rangle_{\Omega^m} \\
    &-\left\langle b(\hat{\vb{B}} - \vb{B}) - \tfrac{1}{2}b^2(\hat{\vb{F}} - \vb{F}), \vb{n}\right\rangle_{\partial\Omega^m} \\
    &=- \left\langle\tfrac{1}{2}\vb{B}, \nabla b\right\rangle_{\Omega^m} - \left\langle\tfrac{1}{2}b\nabla_d,  \vb{B}\right\rangle_{\Omega^m} -\left\langle b(\hat{\vb{B}} - \vb{B}) - \tfrac{1}{2}b^2(\hat{\vb{F}} - \vb{F}), \vb{n}\right\rangle_{\partial\Omega^m}
    \text{,}
\end{split}
\end{equation*}
and using SBP then yields 
\begin{equation}
    \mathcal{A}^m_t = -\left\langle b\hat{\vb{B}} - \tfrac{1}{2}b^2\hat{\vb{F}}, \vb{n}\right\rangle_{\partial\Omega^m}\text{.}
\end{equation}

To ensure that the numerical method is  entropy stable, the jump in the numerical entropy flux across element boundaries must be negative semi-definite, that is
\begin{equation}
    \jump{-b\hat{\vb{B}} + \tfrac{1}{2}b^2\hat{\vb{F}}}\cdot \vb{n}^- \le 0\text{.}
\end{equation}
Expanding the jump condition using $\jump{ab} = \average{a}\jump{b} + \jump{a}\average{b}$ gives
\begin{equation}
\begin{split}
     \jump{-b\hat{\vb{B}} + \tfrac{1}{2}b^2\hat{\vb{F}}}\cdot \vb{n}^-=\jump{b}(-\hat{\vb{B}} + \average{b}\hat{\vb{F}})\cdot \vb{n}^- = -\beta \jump{b}^2 \average{\vb{F}}\cdot \vb{n}^-.
\end{split}
\end{equation}
Thus if $\beta =0$, then the jump vanishes and we have 
$$
\jump{-b\hat{\vb{B}} + \tfrac{1}{2}b^2\hat{\vb{F}}}\cdot \vb{n}^- =0 \implies \mathcal{A}_t =0.
$$
When $\beta = \tfrac{1}{2}\text{sign}(\average{\vb{F}}\cdot\vb{n}^-)$ we have 
$$
\jump{-b\hat{\vb{B}} + \tfrac{1}{2}b^2\hat{\vb{F}}}\cdot \vb{n}^- =- \frac{1}{2} \jump{b}^2 |\average{\vb{F}}\cdot \vb{n}^-|\le 0 \implies \mathcal{A}_t \le 0.
$$

\end{proof}

\subsection{Conservation of energy}

Here we show our semi-discrete method is energy stable, and that for a centred $\hat{G} = \average{G}$ it conserves energy. It is important to note that unlike the vector invariant SWE \cite{ricardo2023conservation}, energy is not a convex function of the prognostic variables. Therefore  there is no guarantee of discrete energy stability with an SSP time integrator.

We define the discrete elemental and global energy as $\mathcal{E}^m = \left\langle \tfrac{1}{2}h|\vb{u}|^2 + \tfrac{1}{2}h(hb)\right\rangle_{\Omega^m}$ and $\mathcal{E} = \sum_m{\mathcal{E}^m}$.

\begin{theorem}
    The semi-discrete method \eqref{eq:dg-velocity}-\eqref{eq:dg-vort} is energy stable in a periodic domain, that is $\mathcal{E}_t \leq 0$. Additionally, with a centred $\hat{G} = \average{G}$ the semi-discrete method conserves energy $\mathcal{E}_t = 0$.
\end{theorem}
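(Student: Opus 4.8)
The plan is to mirror the continuous energy argument of Section 2.4 at the semi-discrete level, using the SBP lemma in place of integration by parts and tracking the boundary contributions from the numerical fluxes. First I would differentiate the elemental energy and substitute the functional derivatives, writing
\begin{equation*}
    \mathcal{E}^m_t = \langle \vb{F}, \vb{u}_t\rangle_{\Omega^m} + \langle G, h_t\rangle_{\Omega^m} + \langle \tfrac{1}{2}h, (hb)_t\rangle_{\Omega^m}\text{,}
\end{equation*}
then insert the discrete equations \eqref{eq:dg-velocity}--\eqref{eq:dg-b} with test functions $\vb{w}=\vb{F}$, $\phi=G$, $\psi=\tfrac{1}{2}h$. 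The vorticity term drops because $\vb{F}\cdot\omega\vb{k}\cross\vb{u} = h\vb{u}\cdot\omega\vb{k}\cross\vb{u}=0$ pointwise, exactly as in the continuous case. This leaves a collection of volume terms coming from $\nabla_d G$, $\tfrac{1}{4}(b\nabla_d h + \nabla_d(hb) - h\nabla_d b)$, $\nabla_d\cdot\vb{F}$, and $\tfrac{1}{2}(b\nabla_d\cdot\vb{F} + \vb{F}\cdot\nabla_d b + \nabla_d\cdot\vb{B})$, plus boundary terms from $\hat{G}$, $\hat{\vb{F}}$, $\hat{\vb{B}}$, and $\hat{b}$.

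Next I would show the volume terms cancel. The pairing $\langle \vb{F}, \nabla_d G\rangle + \langle G, \nabla_d\cdot\vb{F}\rangle$ becomes a pure boundary term $\langle G\vb{F},\vb{n}\rangle_{\partial\Omega^m}$ by SBP. The remaining volume terms are the discrete analogue of the cancellation engineered by the operator splitting in \eqref{eq:momentum1}--\eqref{eq:buoyancy1}: the term $\langle\vb{F}, \tfrac{1}{4}(b\nabla_d h + \nabla_d(hb) - h\nabla_d b)\rangle$ should balance against $\langle\tfrac{1}{2}h, \tfrac{1}{2}(b\nabla_d\cdot\vb{F}+\vb{F}\cdot\nabla_d b+\nabla_d\cdot\vb{B})\rangle$ after applying SBP to the terms containing $\nabla_d\cdot\vb{F}$, $\nabla_d(hb)$, and $\nabla_d\cdot\vb{B}$, up to boundary contributions. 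Concretely, SBP converts $\langle\tfrac{1}{2}h, \tfrac{1}{2}\nabla_d\cdot\vb{B}\rangle$ into $-\langle\tfrac{1}{2}\nabla_d(\tfrac{1}{2}h),\vb{B}\rangle$ plus a boundary term, and $\vb{B}=hb\vb{u}=b\vb{F}$, so the pointwise identity $\langle\vb{F},\tfrac14 b\nabla_d h\rangle$ matched with $-\langle\tfrac14\nabla_d h, b\vb{F}\rangle$-type terms cancels without needing a discrete chain rule (this is the whole point of the split form). I would carefully bookkeep each of the six volume terms and verify they sum to zero up to boundary terms, which is the routine but delicate core of the computation.

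After the volume cancellation, what remains is $\mathcal{E}^m_t = -\langle \text{(numerical energy flux)},\vb{n}\rangle_{\partial\Omega^m}$ where the flux collects the $\hat{G}$, $\hat{\vb{F}}$, $\hat{b}$ contributions together with the boundary leftovers of the SBP manipulations. Summing over elements, the continuous (central) parts telescope to zero by periodicity as in the mass proof, leaving the jump terms. The key identity will be expanding $\jump{\cdot}$ using $\jump{ab}=\average{a}\jump{b}+\jump{a}\average{b}$, just as in the entropy proof, to show the surviving term is proportional to $-\alpha\jump{\vb{F}\cdot\vb{n}}^2$ (from the $\hat G$ flux) plus possibly a $\hat b$-dependent piece that must be shown nonpositive or zero. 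For $\alpha=0$ and $\hat b=\average b$ this vanishes, giving $\mathcal{E}_t=0$; for $\alpha=\tfrac{g}{2c}>0$ the $\alpha$ term is manifestly $\le 0$, and the $\beta$-dependent contribution from the upwinded $\hat b$ must be checked to not produce energy (the text already asserts this motivated the choice of $\hat b$).

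The main obstacle I expect is the volume-term bookkeeping in the second step: getting every factor of $\tfrac12$ and $\tfrac14$ and every SBP-generated boundary term to line up so that the volume terms cancel exactly and the boundary terms assemble into a single clean numerical energy flux. In particular one must be careful that the $\hat{b}$ appearing in $\hat{\vb{B}}=\hat b\average{\vb{F}}$ interacts correctly with the $\tfrac12 h$ test function and the split $b\nabla_d h$ term in the velocity equation — this coupling is exactly where an energy-stable-but-not-entropy-stable naive scheme would fail, so the cancellation is structural rather than accidental and requires the specific fluxes and splitting chosen in Section 3.4. Once the algebra is organized, the sign analysis of the jump terms is short and parallels the entropy proof.
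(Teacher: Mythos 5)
Your proposal follows essentially the same route as the paper: substitute the test functions $\vb{F}$, $G$, $\tfrac{1}{2}h$, kill the Coriolis term pointwise by collocation, cancel the split volume terms via SBP, and reduce the boundary residue to $-\alpha(\jump{\vb{F}}\cdot\vb{n})^2$ plus a buoyancy piece that vanishes because $\hat{\vb{B}}=\hat{b}\average{\vb{F}}$ by construction. The paper merely organizes the bookkeeping you flag as delicate by splitting $\mathcal{E}_t$ into three separately-vanishing exchanges (Coriolis, kinetic--internal, kinetic--potential), each handled with its own test-function pairing, and confirms that the $\hat{b}$-dependent jump is exactly zero rather than just nonpositive.
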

\begin{proof}
    Mirroring the continuous analysis in section 2.3, with the semi-discrete approximation the time derivative of the energy is
    \begin{equation}
        \mathcal{E}^m_t = \left\langle\frac{\partial e}{\partial \vb{u}}, \vb{u}_t\right\rangle_{\Omega^m} + \left\langle \frac{\partial e}{\partial h}, h_t\right\rangle_{\Omega^m} + \left\langle\frac{\partial e}{\partial hb}, (hb)_t\right\rangle_{\Omega^m}\text{.}
    \end{equation}
Substituting $\phi=\frac{\partial e}{\partial h}=G$ and $\psi=\frac{\partial e}{\partial hb}=\tfrac{1}{2}h$ into \eqref{eq:dg-depth}-\eqref{eq:dg-b} and using SBP yields
\begin{equation}
    \left\langle \frac{\partial e}{\partial h}, h_t\right\rangle_{\Omega^m} = \left\langle \nabla_d G, \vb{F}\right\rangle_{\Omega^m} - \left\langle G, \hat{\vb{F}}\cdot \vb{n}\right\rangle_{\partial\Omega^m},
\end{equation}
\begin{equation}
\begin{split}
    \left\langle \frac{\partial e}{\partial hb}, (hb)_t\right\rangle_{\Omega^m} &= \left\langle \tfrac{1}{2}\nabla_d h, \frac{1}{2}\vb{B}\right\rangle_{\Omega^m}+\left\langle \tfrac{1}{2}\nabla_d (hb), \frac{1}{2}\vb{F}\right\rangle_{\Omega^m} \\ &- \left\langle \tfrac{1}{2}h, \frac{1}{2}\vb{F}\cdot\nabla_d b\right\rangle_{\Omega^m}-\left\langle \tfrac{1}{2}h, \hat{\vb{B}}\cdot \vb{n}\right\rangle_{\partial\Omega^m}.
\end{split}
\end{equation}
Recalling $\frac{\partial e}{\partial \vb{u}}=\vb{F}$ and noting $\vb{F}\cdot \omega \vb{k}\cross \vb{u}=h\omega \vb{u}\cdot \vb{k}\cross \vb{u}=0$ for collocated methods, we see that the internal terms of the energy evolution equation cancel yielding an evolution equation for $\mathcal{E}^m$ in terms of a numerical flux
\begin{equation}
    \mathcal{E}^m_t + \left\langle \boldsymbol{\mathcal{E}}_{flux}\cdot \vb{n}\right\rangle_{\partial\Omega^m} = 0,
\end{equation}
where
\begin{equation}
  \boldsymbol{\mathcal{E}}_{flux} = \vb{F}\hat{G} - \vb{F}G + G\hat{\vb{F}} + \tfrac{1}{2}\hat{b}\average{h}\vb{F} - \tfrac{1}{2}\hat{b}h\vb{F} + \tfrac{1}{2}h\hat{\vb{B}}.
\end{equation}
To ensure that the energy is stable the jump in the numerical flux across element boundaries must be be negative semi-definite
\begin{equation}
    -\jump{
\boldsymbol{\mathcal{E}}_{flux}
    }\cdot \vb{n}^+ \leq 0\text{.}
\end{equation}
Recalling the definitions of the numerical fluxes 
\begin{equation}
    \hat{G} = \average{G} + \alpha\jump{\vb{F}}\cdot \vb{n}^+, \quad \hat{\vb{F}}=\average{\vb{F}}, \quad \hat{\vb{B}}=\hat{b}\hat{\vb{F}} = \hat{b}\average{\vb{F}},
\end{equation}
 and using $\jump{ab} = \jump{a}\average{b} + \average{a}\jump{b}$ yields
\begin{equation}
\begin{split}
    -\jump{
\boldsymbol{\mathcal{E}}_{flux}
    }\cdot \vb{n}^+ &= -\left( 
    \jump{\vb{F}}\hat{G} - \jump{\vb{F} G} + \average{\vb{F}}\jump{G}
    \right)\cdot \vb{n}^+ \\ & - \tfrac{1}{2}\left(\hat{b}\average{h}\jump{\vb{F}} -\hat{b}\jump{h\vb{F}}  + \jump{h}\hat{b}\average{\vb{F}}\right)\cdot \vb{n}^+ \\
    &=-\left( 
    \jump{\vb{F}}\hat{G} - \jump{\vb{F}}\average{G} - \average{\vb{F}}\jump{G} + \average{\vb{F}}\jump{G}
    \right)\cdot \vb{n}^+ \\ & - \tfrac{1}{2}\left(\hat{b}\average{h}\jump{\vb{F}} -\hat{b}\jump{h}\average{\vb{F}} -\hat{b}\average{h}\jump{\vb{F}} + \hat{b}\jump{h}\average{\vb{F}}\right)\cdot \vb{n}^+ \\
    &= -\left( 
    \jump{\vb{F}}\hat{G} - \jump{\vb{F}}\average{G}
    \right)\cdot \vb{n}^+ \\
    &= - \alpha \left(\jump{\vb{F}}\cdot \vb{n}^+\right)^2,
\end{split}
\end{equation}
and therefore $\alpha \geq 0 \implies \mathcal{E}_t \leq 0$ and $\alpha = 0 \implies \mathcal{E}_t = 0$, completing the proof.
\end{proof}

\subsection{Conservation of vorticity}

In contrast to the RSW, the TRSW only globally conserves vorticity and does not possess a flux form vorticity conservation equation. Here, we prove that the discrete vorticity is globally conserved.
\begin{theorem}
    The discrete method \eqref{eq:dg-velocity}-\eqref{eq:dg-vort} conserves total absolute vorticity $\mathcal{W} = \sum_m \left\langle 1, \omega\right\rangle_{\Omega^m}$.
\end{theorem}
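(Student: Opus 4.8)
The plan is to mirror the continuous argument for total vorticity conservation (the theorem of section 2.3), using the weak definition \eqref{eq:dg-vort} of the discrete absolute vorticity. First I would set the test function to the constant $\phi = 1$ in \eqref{eq:dg-vort}. By the formula \eqref{eq:curl2}, the discrete rotated gradient $\nabla_d \cross \phi\vb{k}$ involves only $\partial_\xi\phi$ and $\partial_\eta\phi$, both of which vanish when $\phi \equiv 1$; hence the volume curl term drops out entirely and we are left with
\begin{equation*}
    \langle 1, \omega\rangle_{\Omega^m} = \langle 1, f\rangle_{\Omega^m} + \langle 1, \average{\vb{u}}\cdot\vb{t}\rangle_{\partial\Omega^m}\text{.}
\end{equation*}
Summing over the elements gives $\mathcal{W} = \langle 1, f\rangle_\Omega + \sum_m \langle 1, \average{\vb{u}}\cdot\vb{t}\rangle_{\partial\Omega^m}$.

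The core of the proof is then to show that the assembled boundary sum vanishes, exactly as the flux terms did in the proofs of mass and buoyancy conservation. The numerical trace $\average{\vb{u}}$ is single-valued across each element interface, and on a conforming mesh the boundary GLL nodes, weights, and arc-length metric factors $|\vb{g}_1|$, $|\vb{g}_2|$ are shared by the two abutting elements; the only quantity that changes sign is the boundary tangent $\vb{t}$, since the two elements traverse their common edge in opposite senses. Consequently the contributions $\average{\vb{u}}\cdot\vb{t}$ cancel interface by interface, and the periodic boundary conditions pair up the remaining outer edges in the same way. Hence $\sum_m \langle 1, \average{\vb{u}}\cdot\vb{t}\rangle_{\partial\Omega^m} = 0$ and $\mathcal{W} = \langle 1, f\rangle_\Omega$. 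Because the mesh is fixed and $f$ is time-invariant, $\langle 1, f\rangle_\Omega$ is a constant, so $\mathcal{W}_t = 0$.

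I expect the main obstacle to be the orientation bookkeeping in the cancellation step: one has to fix a convention for $\vb{t}$ (for instance requiring $(\vb{t},\vb{n},\vb{k})$ to be positively oriented) and check that under this convention the tangent flips across every shared edge while all the nodal weights and metric terms entering $\langle \cdot,\cdot\rangle_{\partial\Omega^m}$ agree between neighbours — the same structural fact that underlies the SBP property. Once this is established the remainder is immediate. As in the continuous theorem, I would also state the standing hypothesis that $f$ does not depend on time; otherwise the discrete total vorticity merely tracks $\langle 1, f\rangle_\Omega$.
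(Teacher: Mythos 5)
Your proposal is correct and follows essentially the same route as the paper's proof: set $\phi=1$ in \eqref{eq:dg-vort}, note the discrete curl of a constant vanishes, observe the assembled boundary terms cancel because $\average{\vb{u}}$ is single-valued while the edge tangents of neighbouring elements are oppositely oriented, and conclude $\mathcal{W}=\langle 1,f\rangle_\Omega$ is constant in time. Your extra care with the orientation and metric-factor bookkeeping only makes explicit what the paper summarises as cancellation by continuity of $\average{\vb{u}}$.
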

\begin{proof}
Substituting $\phi=1$ into \eqref{eq:dg-vort} and summing over all elements yields the total vorticity
\begin{equation}
\begin{split}
     \mathcal{W} &= \sum_m\left\langle -\nabla_d \cross 1\vb{k}, \vb{u}\right\rangle_{\Omega^m} + \left\langle 1, f\right\rangle_{\Omega^m} + \left\langle 1, \average{\vb{u}} \cdot \vb{t}\right\rangle_{\partial\Omega^m} \\
    &= \left\langle 1, f\right\rangle_{\Omega}\text{,}
\end{split}
\end{equation}
where the first term is $0$ as the discrete curl of a constant is $0$, and the third term cancels when summing over elements as $\average{\vb{u}}$ is continuous across element boundaries by construction. $f$ is temporally constant and therefore vorticity is globally conserved.
\end{proof}

\subsection{Compatible buoyancy advection}

Compatible buoyancy advection is equivalent to showing that for the case $b=1$ the discrete mass-weighted buoyancy equation reduces to the discrete mass equation \cite{eldred2019quasi}. 
\begin{theorem}
    The discrete method \eqref{eq:dg-velocity}-\eqref{eq:dg-vort} compatibly advects buoyancy.
\end{theorem}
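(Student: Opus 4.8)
The plan is to show that when $b$ is initially the constant $1$ the discrete mass-weighted buoyancy equation \eqref{eq:dg-b} becomes identical to the discrete mass equation \eqref{eq:dg-depth} with $h$ replaced by $hb$, and then to invoke uniqueness of the semi-discrete flow to conclude that $hb(t) \equiv h(t)$, i.e. $b(t) \equiv 1$, for all $t \ge 0$. Two elementary facts about the collocated DG-SEM discretisation are needed at the outset. First, the nodal Lagrange basis is a partition of unity, so the constant function $1$ is represented exactly in $S$ and is annihilated by the discrete gradient, $\nabla_d b = 0$. Second, because the method is collocated, products are formed nodally, so $b_{ijm} \equiv 1$ forces $hb = h$ as an element of $S$, and hence $\vb{B} = hb\vb{u} = h\vb{u} = \vb{F}$, while $\hat{b} = \average{b} + \beta\jump{b} = 1$ irrespective of the value of $\beta$ (since $\average{b} = 1$ and $\jump{b} = 0$).

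With these in hand I would simplify \eqref{eq:dg-b} term by term. In the volume integral the split operator recombines exactly,
\begin{equation*}
    \tfrac{1}{2}\big(b\,\nabla_d\cdot\vb{F} + \vb{F}\cdot\nabla_d b + \nabla_d\cdot\vb{B}\big) = \tfrac{1}{2}\big(\nabla_d\cdot\vb{F} + 0 + \nabla_d\cdot\vb{F}\big) = \nabla_d\cdot\vb{F}\text{,}
\end{equation*}
which coincides with the volume term of \eqref{eq:dg-depth}; here one uses $\nabla_d b = 0$, linearity of $\nabla_d\cdot$, and $\vb{B} = \vb{F}$. In the boundary integral, $\hat{\vb{B}} = \hat{b}\average{\vb{F}} = \average{\vb{F}} = \hat{\vb{F}}$ and $\vb{B} = \vb{F}$, so $\langle\psi,(\hat{\vb{B}}-\vb{B})\cdot\vb{n}\rangle_{\partial\Omega^m} = \langle\psi,(\hat{\vb{F}}-\vb{F})\cdot\vb{n}\rangle_{\partial\Omega^m}$. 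Hence, under $b \equiv 1$, equation \eqref{eq:dg-b} reads
\begin{equation*}
    \langle\psi,(hb)_t\rangle_{\Omega^m} + \langle\psi,\nabla_d\cdot\vb{F}\rangle_{\Omega^m} + \langle\psi,(\hat{\vb{F}}-\vb{F})\cdot\vb{n}\rangle_{\partial\Omega^m} = 0\text{,}\quad\forall\psi\in S\text{,}
\end{equation*}
which is exactly \eqref{eq:dg-depth} written for $hb$, establishing the reduction that defines compatible advection.

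To round this off I would observe that the semi-discrete system \eqref{eq:dg-velocity}--\eqref{eq:dg-vort}, after testing against all basis functions, is an autonomous ODE whose right-hand side is locally Lipschitz wherever $h > 0$. For initial data with $b_0 \equiv 1$ one has $(hb)|_{t=0} = h|_{t=0}$, and the two displayed equations above are the same evolution law, so $(h,\vb{u},hb) = (h,\vb{u},h)$ solves the full system; by uniqueness of the initial value problem it is \emph{the} solution, and therefore $b(t) \equiv 1$ for all $t$. I expect the only delicate point — and it is a mild one — to be the justification that $hb = h$ in $S$ and that \emph{every} $b$-dependent ingredient (the split volume operator, the flux $\hat{b}$, and $\hat{\vb{B}}$) degenerates consistently; in particular one must check that the operator splitting introduced in \eqref{eq:buoyancy1} for the sake of entropy stability does not destroy compatibility, which it does not, precisely because the three split pieces recombine exactly whenever $b$ is constant.
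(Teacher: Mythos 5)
Your proposal is correct and follows essentially the same route as the paper: substitute $b\equiv 1$, observe that the split volume terms recombine to $\nabla_d\cdot\vb{F}$ and that $\hat{b}=1$ forces $\hat{\vb{B}}=\hat{\vb{F}}$, so \eqref{eq:dg-b} collapses to \eqref{eq:dg-depth}. The closing ODE-uniqueness argument showing that $b(t)\equiv 1$ persists is a small addition beyond what the paper states (the paper takes the reduction of the equations itself as the definition of compatible advection), but it is valid and only strengthens the conclusion.
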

\begin{proof}
    We show that both the volume terms and numerical fluxes in \eqref{eq:dg-depth} and \eqref{eq:dg-b} are equivalent when $b=1$. Substituting $b=1$ into the volume terms of \eqref{eq:dg-b} yields
\begin{equation}
	\left\langle \phi, \tfrac{1}{2}(1\nabla_d \cdot \vb{F} + \vb{F}\cdot \nabla_d 1 + \nabla_d \cdot 1\vb{F})\right\rangle_{\Omega^m} = \left\langle \phi, \nabla_d \cdot \vb{F}\right\rangle_{\Omega^m}\text{.}
\end{equation}
Noting that $b=1$ everywhere $\implies \hat{b} = 1$, therefore $\hat{\vb{B}} = \hat{b}\hat{\vb{F}} = \hat{\vb{F}}$. Therefore \eqref{eq:dg-b} reduces to \eqref{eq:dg-depth} when $b=1$, and hence the buoyancy is compatibly advected.
\end{proof}

\section{Numerical Results}
In this section we present the results of numerical experiments which verify our theoretical analysis. For all experiments we use an equi-angular cubed sphere mesh \cite{ranvcic1996global}, an SSP-RK3 time integrator \cite{shu1988efficient}, and third-order polynomials. Unless otherwise mentioned, in our experiments we use an adaptive time step with a fixed CFL of 0.8, $CFL = \frac{\Delta t}{(2p + 1)Delta x}$ where $p$ is the polynomial order, $c$ is the fastest wave speed, and $\Delta x$ is the element spacing.

\subsection{Steady state convergence test}

We modified the Williamson test case 2 for steady geostrophic balance of the rotating shallow water equations on the sphere \cite{williamson1992standard} using the procedure described in \cite{eldred2019quasi} to obtain a steady state for the TRSW. The resulting initial condition is
\begin{equation}
    u = u_0 \cos(\theta)\text{,}
\end{equation}
\begin{equation}
    h = H - \tfrac{1}{g}\big(a f u_0 + \tfrac{1}{2}u_0^2\big)\sin^2(\theta)\text{,}
\end{equation}
\begin{equation}
    b = g\left(1 + c\tfrac{H}{h^2}\right)\text{,}
\end{equation}
where $\theta$ is latitude, $u$ is the zonal velocity, $a=6.37122\times 10^6$ m is the planetary radius, $g=9.80616 \text{ ms}^{-2}$, $u_0 = 38.61068 \text{ ms}^{-1}$ is the maximum velocity, $H = \tfrac{1}{g}2.94\times 10^4$ m is the maximum height, $c=0.05$, and $f = 2\times 7.292\times 10^{-5} \sin\theta \text{ s}^{-1}$.

Figure \ref{fig:steady_state_error} shows the error for the conservative and dissipative methods at day 5, which converge at order 3.4 and 3.8 respectively. While the orders of convergence are similar, the dissipative method is nearly an order of magnitude more accurate. This is likely due to the dissipative method damping high frequency waves close to the grid scale, which are known to be inaccurate for SEM and can trigger further errors in non-linear systems.

\begin{figure}[!hbtp]
\begin{center}
	\includegraphics[width=0.8\textwidth]{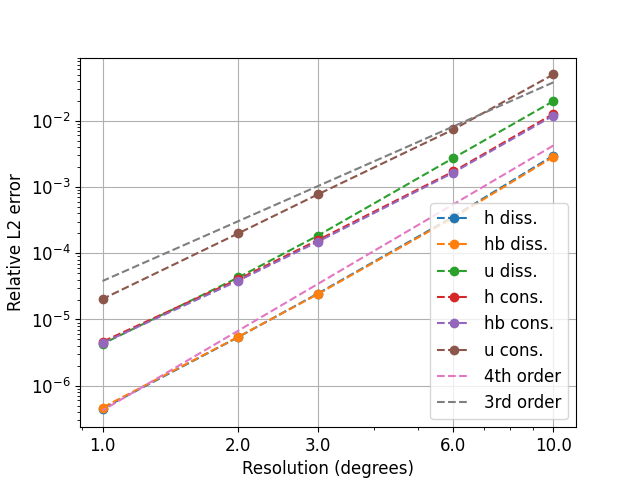}
 \caption{Steady state test case $L^2$ errors for the conservative and dissaptive methods at various resolutions. The resolution is taken as the average nodal spacing and results are shown at 5 days.}
 \label{fig:steady_state_error}
\end{center}
\end{figure}

\subsection{Modified Galewsky}

The Galewsky test case \cite{galewsky2004initial} is a RSWE test case case in which a steady state zonal jet perturbed by a Gaussian hill in the depth field triggers a shear flow instablity. The Gaussian hill induces gravity waves which excite a shear instability, causing the vorticity field to roll up into distinctive Kelvin-Helmholtz billows and the flow to become turbulent. We modify this test case by normalising the depth perturbation from the original Galewksy test and applying this to $b$.  This gives the initial condition
\begin{equation}
    u = \begin{cases}
        \dfrac{u_0}{\gamma_1} \exp\big((\theta-\theta_0)^{-1}(\theta-\theta_1)^{-1}\big)\text{, for } \theta_0 < \theta < \theta_1 \\ 
        0\text{, otherwise}
    \end{cases}
\end{equation}
\begin{equation}
    h = H + p(\lambda, \theta) - \dfrac{a}{g}\int_{-\tfrac{\pi}{2}}^\theta{ u(\theta') \bigg[f + \dfrac{\tan(\theta')}{a}u(\theta')\bigg]d\theta'}\text{,}
\end{equation}
\begin{equation}
    b = g + \tfrac{1}{120}p(\lambda, \theta)\text{,}
\end{equation}
\begin{equation}
    p(\lambda, \theta) = 120\cos(\theta) \exp(-(\gamma_2\lambda)^2)\exp(-\gamma_3^2(\theta - \theta_2))\text{,}
\end{equation}
where $p$ is the pertubation, $u$ is the zonal velocity, $\theta$ is latitude, $\lambda$ is longitude, $a=6.37122 \times 10^6$ m is the sphere's radius, $H = 10^4$ m is the maximum depth, $u_0 = 80 \text{ ms}^{-1}$ is the maximum velocity, $\gamma_1 = \exp(-4 (\theta_1 - \theta_0)^{-2})$  is a normalisation constant, $\gamma_2=3$, $\gamma_3=15$, $\theta_0 = \tfrac{\pi}{7}$, $\theta_1 = \tfrac{\pi}{2} - \theta_0$, $\theta_2 = \tfrac{\pi}{4}$, $f=2 \times 7.292 \times 10^{-5} \sin\theta \text{ s}^{-1}$, and $g=9.80616 \text{ ms}^{-2}$.

\subsubsection{Energy and entropy conservation}

To verify that our conservative method semi-discretely conserve energy and entropy we run the modified Galewsky test case at a coarse resolution of $6\times5\times5$ elements for decreasing time steps. Figure \ref{fig:energy-error} shows the errors at day 10 which converge at 3rd order, verifying the semi-discrete energy and entropy conservation of our scheme.

\begin{figure}[!hbtp]
\begin{center}
	\includegraphics[width=0.8\textwidth]{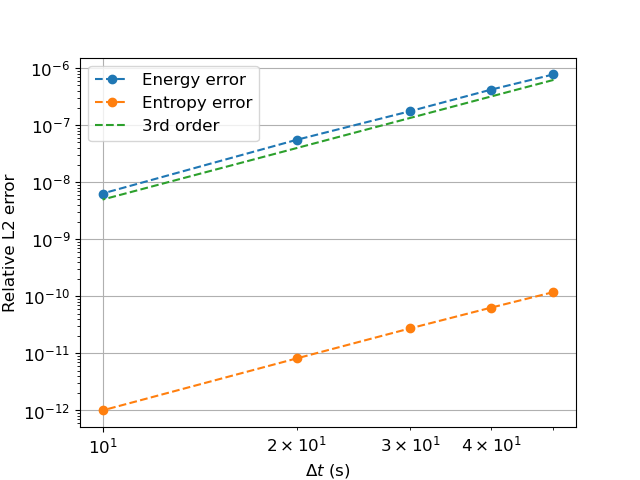}
 \caption{Relative energy and entropy conservation error at day 10 of the modified Galewsky test case for the conservative scheme with varying timesteps and $6\times5\times 5$ third order elements.}
  \label{fig:energy-error}
\end{center}
\end{figure}

\subsubsection{High resolution simulation}

We performed the modified Galewksy experiment at a high resolution of $6\times64\times64$ to examine the discrete methods ability to simulate geostrophic turbulence. Figure \ref{fig:conservation} shows the conservation errors, as expected for both methods the mass and mass-weighted buoyancy are conserved up to machine precision, and the entropy is discretely stable. Interestingly the energy also appears to be discretely stable for this test case. We note that even without dissipation the conservative method runs stably for the full 20 day period. Figures \ref{fig:galewsky-high-res-vort-7}-\ref{fig:galewsky-high-res-b-16} show the solution at day 7 and 16, at which point spurious noise has polluted the conservative method, while the dissipative method appears to accurately represent the geostrophic turbulent flow and it's advection of $b$. 
\begin{figure}[!hbtp]
\begin{center}
	\includegraphics[width=0.8\textwidth]{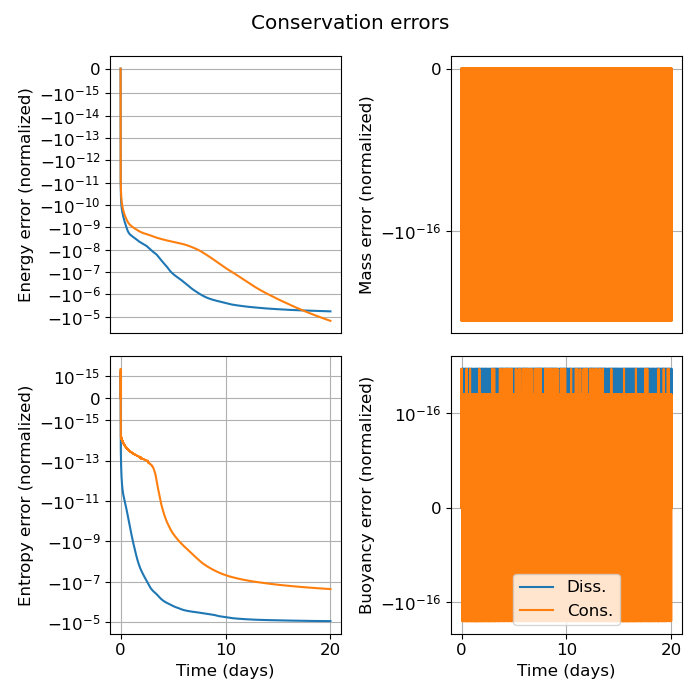}
 \caption{Conservation errors for modified Galewsky test case with the dissipative and conservative methods. A resolution of $6\times64\times64$ third order elements and a timestep of $0.8$ CFL was used.}
 \label{fig:conservation}
\end{center}
\end{figure}

\begin{figure}[!hbtp]
\begin{center}
 \includegraphics[width=0.48\textwidth]{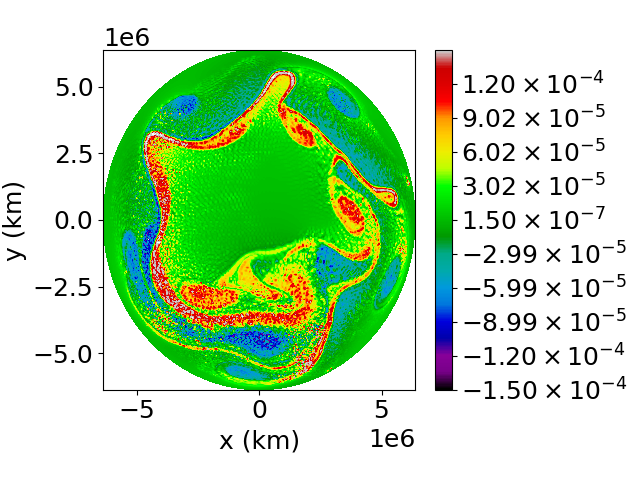}
 \includegraphics[width=0.48\textwidth]{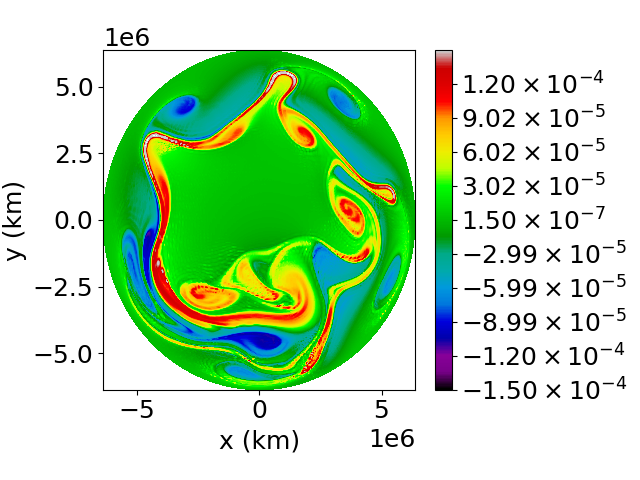} \\
 \caption{Relative vorticity for the modified Galewsky test case at day 7 for the conservative (left), and dissipative methods (right). A resolution of $6\times64\times64$ third order elements and a timestep of $0.8$ CFL was used.}
 \label{fig:galewsky-high-res-vort-7}
\end{center}
\end{figure}

\begin{figure}[!hbtp]
\begin{center}
 \includegraphics[width=0.48\textwidth]{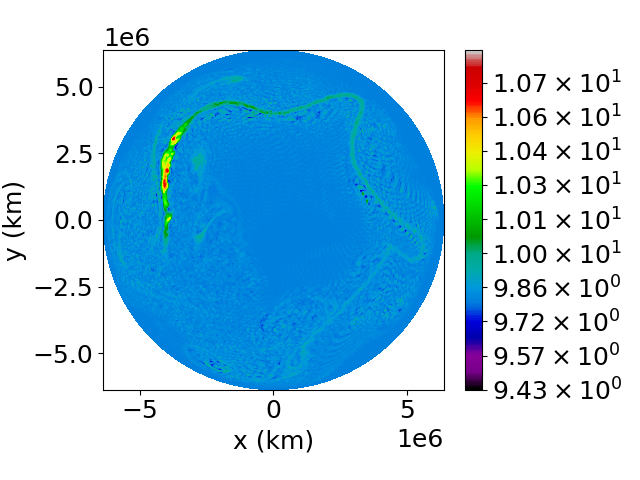}
 \includegraphics[width=0.48\textwidth]{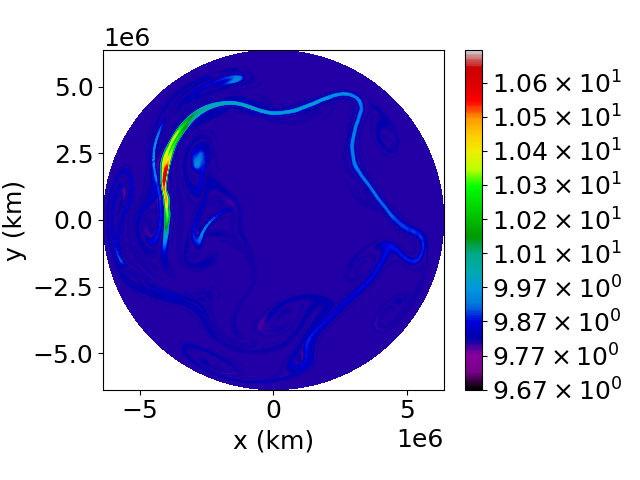} \\
 \caption{Buoyancy for the modified Galewsky test case at day 7 for the conservative (left), and dissipative methods (right). A resolution of $6\times64\times64$ third order elements and a timestep of $0.8$ CFL was used.}
 \label{fig:galewsky-high-res-b-7}
\end{center}
\end{figure}

\begin{figure}[!hbtp]
\begin{center}
 \includegraphics[width=0.48\textwidth]{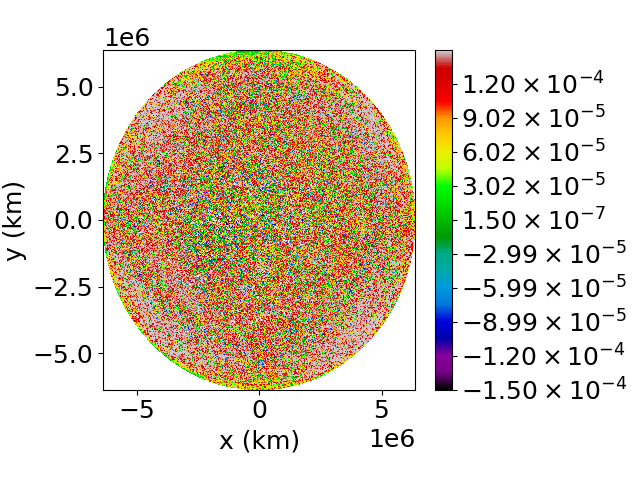}
 \includegraphics[width=0.48\textwidth]{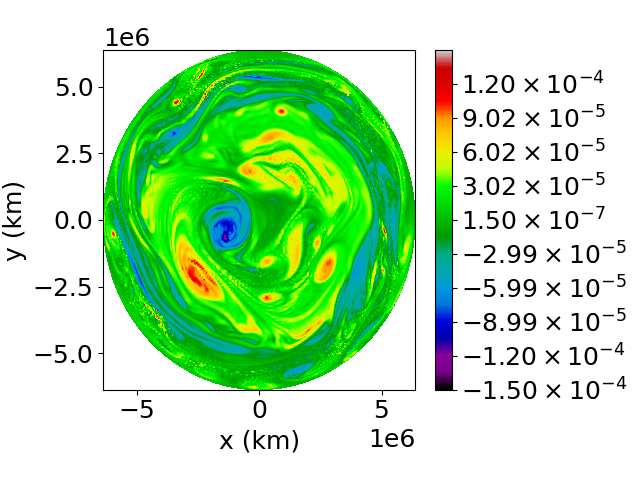} \\
 \caption{Relative vorticity for the modified Galewsky test case at day 16 for the conservative (left), and dissipative methods (right). A resolution of $6\times64\times64$ third order elements and a timestep of $0.8$ CFL was used.}
 \label{fig:galewsky-high-res-vort-16}
\end{center}
\end{figure}

\begin{figure}[!hbtp]
\begin{center}
 \includegraphics[width=0.48\textwidth]{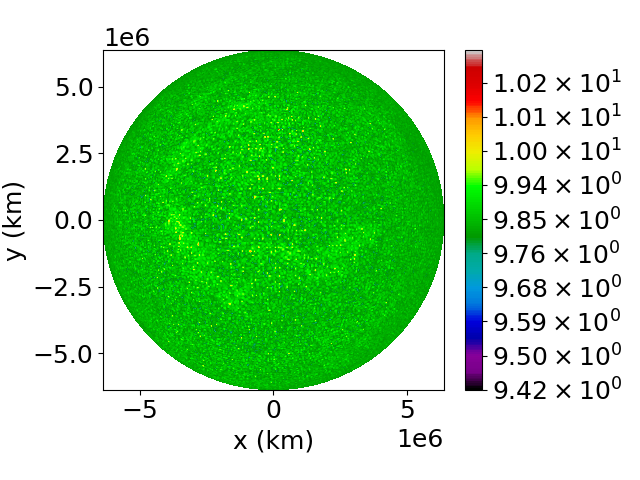}
 \includegraphics[width=0.48\textwidth]{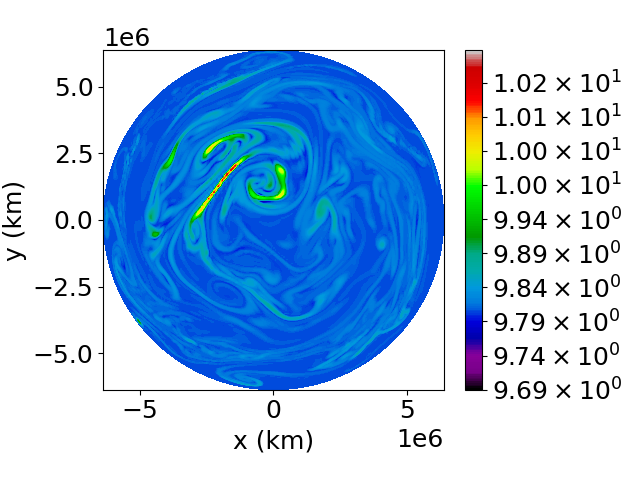} \\
 \caption{Buoyancy for the modified Galewsky test case at day 16 for the conservative (left), and dissipative methods (right). A resolution of $6\times64\times64$ third order elements and a timestep of $0.8$ CFL was used.}
 \label{fig:galewsky-high-res-b-16}
\end{center}
\end{figure}

\subsubsection{Stability}

To explore whether both entropy and energy stability are necessary for numerical stability we performed the low resolution experiment, $6 \times 16 \times 16$ 3rd order elements, with two variants of our discrete method. The first variant contains split terms in the buoyancy equation \eqref{eq:dg-b} but not the velocity equation \eqref{eq:dg-velocity}, and consequently conserves entropy but not energy. The second variant contains no split terms and only conserves energy. 

Figure \ref{fig:stability} shows the conservation errors. For this experiment entropy stability is both necessary and sufficient for numerical stability. The entropy of the energy conserving variant grows exponentially until the simulation becomes unstable at day 3. This numerical instability is preceded by a sharp uptick in energy. In contrast, the entropy of the entropy conserving monotonically decreases and the simulation remains stable for the 20 day period. Curiously, at the onset of numerical instability in the energy conserving variant, the entropy conserving variant also displays a sharp uptick in energy. This is followed by a gradual increase in energy until it stabilises around day 15. 

To understand why entropy stability alone may be sufficient for numerical stability we consider the energy growth of the semi-discrete entropy conserving variant, given by
\begin{equation}
    \mathcal{E}_t = \tfrac{1}{4}\left\langle \vb{F}, h\nabla b - \nabla (hb) - b\nabla h\right\rangle_\Omega\text{.}
\end{equation}
This error results from a failure to discretely mimic the chain rule, and the magnitude of this error is governed by the magnitude of the gradients $\nabla h$ and $\nabla b$. For geophysical fluid flows $h$ tends to be smooth, and entropy stability smooths out $\nabla b$. We hypothesize that this enables entropy stability to indirectly control the energy error and maintain numerical stability.

\begin{figure}[!hbtp]
\begin{center}
	\includegraphics[width=0.8\textwidth]{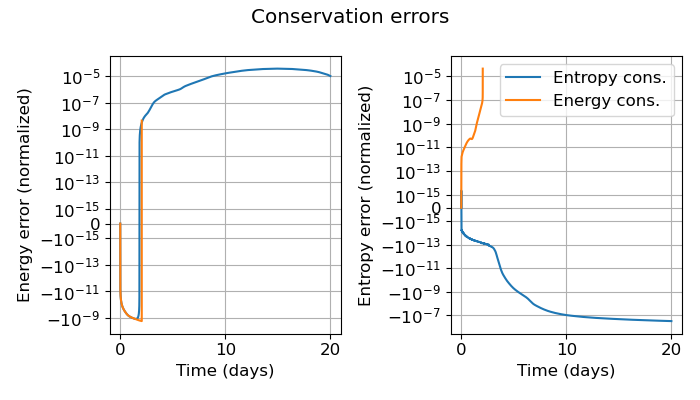}
 \caption{Conservation errors for modified Galewsky test case with the energy conserving and entropy conserving variants. A resolution of $6\times16\times16$ third order elements and a timestep of $0.8$ CFL was used. While the formulation that conserves energy but not entropy is stable for only a couple of days of simulation, the formulation that conserves entropy but not energy is stable for a full 20 days of simulation, long after the model has achieved a mature turbulent state.}
 \label{fig:stability}
\end{center}
\end{figure}

\FloatBarrier
\section{Conclusion}

In this paper, we extend the recent RSW DG method proposed in \cite{ricardo2023conservation} and present a novel DG method for the TRSW on a curvilinear mesh. To the best of our knowledge, this method represents the first approach for solving the TRSW on curvilinear geometry and the first DG method for the TRSW. Our main contributions lie in proving that our method semi-discretely conserves energy and buoyancy variance, and showing that this plays a central role in model stability. Notably, we establish that the variance of any advected quantity, such as the buoyancy in the TRSW, is a mathematical entropy function, and therefore our scheme is discretely entropy stable with an SSP time integrator. This approach can be trivially extended to stably advect any quantity. These theoretical findings are a direct outcome of our numerical fluxes and decomposition of the buoyancy advection and pressure terms. 

To validate our results, we conducted numerical experiments on a cubed sphere mesh, utilizing modified versions of the Galewksy \cite{galewsky2004initial} and Williamson 2 \cite{williamson1992standard} test cases. Through these simulations, we demonstrated the accuracy and stability of our method in effectively capturing geostrophic turbulence and the consequent turbulent advection of buoyancy. To the best of our knowledge, our approach represents the first in the literature capable of simulating well developed  geotrophic turbulunce on the sphere for the TRSW.

The TRSW were targeted in this paper due to their similarity with the compressible Euler equations, and future work will focus on extending this method to the Euler equations.

\section*{Declaration of competing interest}

The authors declare that they have no known competing financial interests or personal relationships that could have appeared to influence the work reported in this paper.

\section*{Data availability}

The code to generate, analyze, and plot the datasets used in this study can be found in the associated reproducibility repository https://github.com/kieranricardo/dg-TRSWe-code.

\clearpage
\bibliographystyle{unsrtnat}
\bibliography{main}  

\end{document}